\documentclass[a4paper,11pt,twoside]{article}
\RequirePackage[a4paper,head=1cm]{geometry}
\usepackage[utf8x]{inputenc}
\usepackage{textcomp}
\usepackage{amsmath,amsfonts,verbatim,afterpage,euscript,mathrsfs,amssymb,empheq}
\usepackage{amsthm}
\usepackage{dsfont}
\usepackage{hyperref}
\usepackage{pst-fill,pst-grad}
\usepackage{textcomp}
\usepackage{multicol}
\usepackage{amsmath}
\usepackage{amssymb}
\usepackage{hyperref} 
\usepackage{dsfont}

\newtheorem{Theoreme}{Theorem}
\newtheorem{Lemme}{Lemma}[section]
\newtheorem{Corollaire}{Corollary}[section]

\newtheorem{Remarque}{\bf Remark}
\newcommand{\mysection}{\setcounter{equation}{0} \section}

\def\vu{\vec{u}}

\title{ \bf On a pointwise estimate for a weighted rough singular integral operator in stratified Lie groups and applications} 
\author{Diego Chamorro\footnote{Laboratoire de Math\'ematiques et Mod\'elisation d'Evry (LaMME) - UMR 8071. Universit\'e d'Evry Val d'Essonne, 23 Boulevard de France, 91037 Evry Cedex, France. email (corresponding author): \textit{diego.chamorro@univ-evry.fr}},   Oscar Jarr\'in\footnote{Escuela de Ciencias F\'isicas y Matem\'aticas, Universidad de las Am\'ericas, V\'ia a Nay\'on, C.P.170124, Quito, Ecuador. email: \textit{oscar.jarrin@udla.edu.ec}}.} 
\begin{document} 
%%%%%%%%%%%%%%%%%%%%%%%%%%%%%%%%%%%%%%%%%%%%%%%%%%%
\maketitle 
\begin{scriptsize}
\abstract{ \noindent  In this article, we present a new pointwise estimate for a weighted rough singular integral operator in the setting of stratified Lie groups. This operator, $T_{\Omega, \varpi}$, is based on a kernel $\Omega$ and a weight $\varpi$, where the kernel satisfies a natural size condition and a cancellation property with respect to the weight $\varpi$. Moreover, we do not assume any kind of regularity on these objects. This weighted rough singular integral operator, applied to a function $f$, is estimated through a combination of information involving a weighted maximal function of the gradient of $f$ and a weighted Morrey space. We also deduce from this pointwise estimate some new weighted functional inequalities and, as an application, we obtain a uniqueness result for a rough version of the stationary Navier-Stokes equation over the Heisenberg group.}\\

{\footnotesize
\noindent \textbf{Keywords: singular integral operators; stratified Lie groups; pointwise estimates; Sobolev inequalities.} \\
\noindent \textbf{MSC (2020) Primary: 42B20; Secondary: 42B25}
}
\end{scriptsize}
%%%%%%%%%%%%%%%%%%%%%%%%%%%%%%%%%%%%%%%%%%%%%%%%%%%
%\tableofcontents 
%%%%%%%%%%%%%%%%%%%%%%%%%%%%%%%%%%%%%%%%%%%%%%%%%%%
\mysection{Introduction and presentation of the results}
In this article, we are interested in establishing a new pointwise control for \emph{weighted} rough singular integral operators in the general framework of stratified Lie groups $\mathbb{G}$. Many recent works, ranging from harmonic analysis to PDEs, take place within the setting of stratified Lie groups (see, \emph{e.g.}, \cite{BFG}, \cite{Cowling}, \cite{HanHuang}, \cite{Oka}, \cite{Tendani}), and these previous works motivate this exploration of a pointwise control for a generic class of singular operators over stratified Lie groups, especially since a pointwise estimate is a powerful tool that can lead to new developments. Stratified Lie groups are special examples of homogeneous spaces considered by Coifman and Weiss in \cite{CW}. They can be endowed with a dilation structure $\delta$ (which gives rise to the concept of \emph{homogeneous dimension} $N$) and with a distance $d$, providing them with a quite natural structure (see \cite{Folland2}; see also Section \ref{Secc_StratLieGroups} for more details about these groups) in which it is interesting to investigate new estimates for rough singular integral operators. These operators are of the following form
\begin{equation}\label{Def_Operator}
T_{\Omega, \varpi}(f)(x)= p.v.\int_{\mathbb{G}}\Omega(x,y)f(y)\varpi(y)dy,
\end{equation}
where $f:\mathbb{G}\longrightarrow \mathbb{R}$ is a suitable function (say $L^1_{loc}(\mathbb{G})$), and where the kernel $\Omega:\mathbb{G}\times \mathbb{G}\longrightarrow \mathbb{R}$ and the weight $\varpi:\mathbb{G}\longrightarrow  ]0,+\infty[$ satisfy some general conditions. For the kernel $\Omega$, the first condition is given by the following control: 
\begin{equation}\label{TailleKernel}
|\Omega(x,y)|\leq \frac{C_\Omega}{d(x,y)^N},
\end{equation}
 for all $x\neq y$ and where $N$ is the homogeneous dimension of the group. The second condition is the following weighted cancellation property 
\begin{equation}\label{IntegraleNulle}
\int_{\{a<d(x,y)<b\}}\Omega(x,y)\varpi(y)dy=0, 
\end{equation}
valid for all $x\in \mathbb{G}$ and for all $0<a<b<+\infty$. Note in particular that no regularity condition is asked for the kernel $\Omega$.\\
 
 \noindent Now, for the weight $\varpi$, we will first consider that it is doubling with respect of the Lebesgue measure of the group $\mathbb{G}$, \emph{i.e.} we have the property 
$$\int_{B(x,2r)}\varpi(y)dy=\varpi(B(x,2r))\leq C\varpi(B(x,r))=C\int_{B(x,r)}\varpi(y)dy,$$
 for all $x\in \mathbb{G}$, all $r>0$ and where $B(x,r)=\{y\in \mathbb{G}: d(x,y)<r\}$ is the usual metric ball associated to the distance $d$. Moreover we will also need the following upper Ahlfors condition for the weight $\varpi$:
\begin{equation}\label{Upper_Ahlfors_condition}
\varpi(B(x, r))\leq C r^{\nu}, \quad \mbox{ with } \quad 0<\nu<+\infty,
\end{equation}
for all $x\in \mathbb{G}$ and $r>0$.\\

As we can see, the setting for the kernel $\Omega$ and the weight $\varpi$ is quite general, and we emphasize that no regularity condition is required either for the kernel $\Omega$ or for the weight $\varpi$.\\

Finally, in the particular case when $\varpi\equiv 1$, \emph{i.e.}, in the unweighted setting, we will use the simpler notation $T_{\Omega}$ instead of $T_{\Omega,1}$.\\

To continue, and in order to state our main result, we need to introduce some (weighted) ingredients. First, we recall that a generic weight $\omega:\mathbb{G}\longrightarrow ]0,+\infty[$ is a locally integrable function, and we say that a weight $\omega$ belongs to the Muckenhoupt class $A_\sigma$ for some $1< \sigma<+\infty$ if
$$[\omega]_{A_{\sigma}}=\underset{B}{\sup}\left(\frac{1}{|B|}\int_{B}\omega(x)dx\right)\left(\frac{1}{|B|}\int_{B}\omega(x)^{-\frac{1}{\sigma-1}}dx\right)^{\sigma-1}<+\infty.$$
The Muckenhoupt class is one of the most important classes of weights when dealing with harmonic analysis tools, in particular in connection with the boundedness of maximal functions; see \cite[Chapter 8]{Grafakos}.\\ 

With this concept at our disposal, we will consider the following weighted tools: 
%%%%%%%%%%%%%%%%%%%%%%%%%%%%%%%%%%%%%%%%%%%%%%%%%%%
\begin{itemize}
\item[$\bullet$] {\bf Weighted maximal functions}. Let $\omega\in A_\sigma$ be a Muckenhoupt weight for some index $1<\sigma<+\infty$. Then, for a locally integrable function $f:\mathbb{G}\longrightarrow \mathbb{R}$, we define the weighted maximal function $\mathscr{M}_{\omega}$ of $f$ by
\begin{equation}\label{Weightedmaximalfunctions}
\mathscr{M}_{\omega}(f)(x)=\displaystyle{\underset{B \ni x}{\sup } \;\frac{1}{\omega(B)}\int_{B }|f(y)|\omega(y)dy}.
\end{equation}

\item[$\bullet$] {\bf Weighted Morrey spaces}. Associated with a Muckenhoupt weight $\omega\in A_\sigma$ with $1<\sigma<+\infty$, we define the weighted Morrey spaces $\mathcal{M}^{p,q}_\omega(\mathbb{G})$ with $1< p\leq q<+\infty$ by the condition
\begin{equation}\label{Def_Weighted_Morrey_space}
\|f\|_{\mathcal{M}^{p,q}_{\omega}}=\underset{x\in \mathbb{R}^n, \; r>0}{\sup}\left(\frac{1}{\omega(B(x,r))^{1-\frac{p}{q}}}\int_{B(x,r)}|f(y)|^p \omega(y)dy\right)^{\frac{1}{p}}<+\infty.
\end{equation}
Note that when $p=q$, the weighted Morrey space $\mathcal{M}^{q,q}_\omega(\mathbb{G})$ becomes the weighted Lebesgue space $L^{q}_\omega(\mathbb{G})$. Recall also that we have the space inclusion $L^{q}_\omega(\mathbb{G})\subset \mathcal{M}^{p,q}_\omega(\mathbb{G})$ for all $1<p\leq q$. 
\end{itemize}
\noindent See Section \ref{Secc_FuncSpaces} below for more details on weighted maximal functions $\mathscr{M}_{\omega}$, weighted Lebesgue spaces $L^q_\omega$, and weighted Morrey spaces $\mathcal{M}^{p,q}_\omega$, as well as some important boundedness properties of the maximal functions in this setting.\\

When no weight is considered (\emph{i.e.}, when $\omega\equiv 1$), we will simply write $\mathscr{M}$ and $\mathcal{M}^{p,q}$ to denote the usual maximal function and the usual Morrey spaces, respectively.\\

We can now state our main result: 
%%%%%%%%%%%%%%%%%%%%%%%%%%%%%%%%%%%%%%%%%%%%%%%%%%%
\begin{Theoreme}[{\bf A pointwise inequality}]\label{Theo1}
	Let $\mathbb{G}$ be a stratified Lie group with homogeneous dimension $N\geq 4$. Consider the operator $T_{\Omega, \varpi}$ defined in (\ref{Def_Operator}), where the kernel $\Omega$ satisfies the conditions (\ref{TailleKernel}) and (\ref{IntegraleNulle}), and where the weight $\varpi$ satisfies the upper Ahlfors condition (\ref{Upper_Ahlfors_condition}) with a power $0<\nu<+\infty$ such that $N-1<\nu$.\\
	
	\noindent Consider now a Muckenhoupt weight $\omega\in A_p$ with $1< p <+\infty$ that satisfies the following lower Ahlfors condition with a power $0<\rho<+\infty$:
	\begin{equation}\label{Lower_Ahlfors_condition}
	Cr^{\rho}\leq  C\omega(B(x,r)),\qquad \mbox{for all } 0<r<+\infty.
	\end{equation}
	
	\noindent Associated with this weight $\omega$, consider now a function $f:\mathbb{G}\longrightarrow \mathbb{R}$ such that its gradient $\nabla f$ belongs to the weighted Morrey space $\mathcal{M}^{p,q}_\omega(\mathbb{G})$ with $1< p< q<+\infty$.\\
	
\noindent Assume that the upper and lower Ahlfors powers $\nu, \rho$ of the weights $\varpi$ and $\omega$, the parameter $q$ of the weighted Morrey space $\mathcal{M}^{p,q}_\omega(\mathbb{G})$, and the homogeneous dimension $N$ are related by the condition
\begin{equation}\label{HypothesePointwise}
\nu+1-N-\frac{\rho}{q}<0,
\end{equation}
and that the weights $\varpi$ and $\omega$ satisfy the balance condition
\begin{equation}\label{HypotheseBalance}
\frac{r(B)}{r(B_0)}\left(\frac{\varpi(B)}{\varpi(B_0)}\right)^{\frac{1}{q}}\leq C \left(\frac{\omega(B_0)}{\omega(B)}\right)^{\frac{1}{p}},
\end{equation}
for all metric balls $B\subset cB_0$, where $r(B)$ denotes the radius of the ball, and $c,C$ are large constants. \\
	
	\noindent Then we have the following pointwise estimate:
	\begin{equation}\label{PointWiseIneqFeli1}
	T_{\Omega, \varpi}(f)(x)\leq C\left(\mathscr{M}_{\omega}(|\nabla f|^p)^{\frac{1}{p}}(x) \right)^{1-\theta}\|\nabla f\|_{\mathcal{M}_\omega^{p,q}}^\theta,
	\end{equation}
	where the index $0<\theta<1$ is given by $\theta=\frac{q}{\rho}(\nu+1-N)$.
\end{Theoreme}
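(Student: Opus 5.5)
The plan is the classical Hedberg-type argument: split $T_{\Omega,\varpi}$ into a local and a global part at a radius $R>0$, bound each piece by a power of $R$, and optimize in $R$. Fix $x\in\mathbb{G}$. By the cancellation property (\ref{IntegraleNulle}), for every annulus $\{a<d(x,y)<b\}$ we may subtract a constant from $f$. So we write
$$T_{\Omega,\varpi}(f)(x)=\sum_{j\in\mathbb{Z}}\int_{\{2^{j}\le d(x,y)<2^{j+1}\}}\Omega(x,y)\big(f(y)-f_{B_j}\big)\varpi(y)\,dy,$$
where $B_j=B(x,2^{j+1})$ and $f_{B_j}$ is the (Lebesgue) mean of $f$ on $B_j$. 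Using the size condition (\ref{TailleKernel}), each dyadic term is bounded by
$$C\,2^{-jN}\int_{B_j}|f(y)-f_{B_j}|\varpi(y)\,dy.$$
To control this we apply Hölder with exponent $q$ in the measure $\varpi\,dy$ together with a Poincaré inequality on stratified groups (valid for the horizontal gradient $\nabla$). The aim is the intermediate bound
$$\int_{B_j}|f-f_{B_j}|\varpi\le C\,2^{j}\,\varpi(B_j)^{1-\frac1q}\Big(\frac{\varpi(B_j)}{\omega(B_j)}\Big)^{\frac1q}\Big(\int_{c B_j}|\nabla f|^p\omega\Big)^{\frac1p}\omega(B_j)^{\frac1q-\frac1p}\cdot(\dots).$$
\textbf{This is the main obstacle.} We have to pass from an oscillation measured against $\varpi$ to a gradient measured against $\omega$. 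I would first try a weighted two-weight Poincaré inequality of the form
$$\Big(\frac{1}{\varpi(B)}\int_B|f-f_B|^q\varpi\Big)^{1/q}\le C\,r(B)\Big(\frac{1}{\omega(B)}\int_{cB}|\nabla f|^p\omega\Big)^{1/p},$$
which is exactly the type of estimate the balance condition (\ref{HypotheseBalance}) is designed to give (Chanillo--Wheeden / Franchi--Lu--Wheeden type, using $\omega\in A_p$ and the doubling of $\varpi$). If it is not available as stated earlier, I would derive it from the representation $|f(y)-f_B|\le C\int_{cB}|\nabla f(z)|\,d(y,z)^{1-N}dz$ combined with the balance condition.

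With this in hand, each dyadic term is bounded by $C\,2^{-jN}\,2^{j}\,\varpi(B_j)\big(\omega(B_j)^{-1}\int_{cB_j}|\nabla f|^p\omega\big)^{1/p}$, and then $\varpi(B_j)\le C2^{j\nu}$ by (\ref{Upper_Ahlfors_condition}).

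\emph{Local part ($2^j\le R$).} We bound the averaged integral by $\mathscr{M}_\omega(|\nabla f|^p)(x)^{1/p}$. Summing gives $C\sum_{2^j\le R}2^{j(\nu+1-N)}\mathscr{M}_\omega(|\nabla f|^p)^{1/p}(x)$. Since $\nu>N-1$ the exponent is positive, so this geometric sum is $\le C R^{\nu+1-N}\mathscr{M}_\omega(|\nabla f|^p)^{1/p}(x)$.

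\emph{Global part ($2^j>R$).} By the definition (\ref{Def_Weighted_Morrey_space}) of the Morrey norm,
$$\Big(\omega(B_j)^{-1}\int_{cB_j}|\nabla f|^p\omega\Big)^{1/p}\le C\,\omega(B_j)^{-1/q}\|\nabla f\|_{\mathcal{M}^{p,q}_\omega},$$
using doubling of $\omega\in A_p$. The lower Ahlfors condition (\ref{Lower_Ahlfors_condition}) then gives $\omega(B_j)^{-1/q}\le C2^{-j\rho/q}$. The sum becomes $C\sum_{2^j>R}2^{j(\nu+1-N-\rho/q)}\|\nabla f\|_{\mathcal{M}^{p,q}_\omega}$, which converges by (\ref{HypothesePointwise}) and is $\le CR^{\nu+1-N-\rho/q}\|\nabla f\|_{\mathcal{M}^{p,q}_\omega}$.

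Adding the two parts,
$$|T_{\Omega,\varpi}f(x)|\le C\big(R^{a}A+R^{a-\rho/q}B\big),$$
with $a=\nu+1-N$, $A=\mathscr{M}_\omega(|\nabla f|^p)^{1/p}(x)$ and $B=\|\nabla f\|_{\mathcal{M}^{p,q}_\omega}$. Choosing $R=(B/A)^{q/\rho}$ balances the two terms and yields $CA^{1-\theta}B^{\theta}$ with $\theta=\frac{q}{\rho}a$. Finally, $\theta<1$ is exactly (\ref{HypothesePointwise}) and $\theta>0$ is $\nu>N-1$. The principal value is justified because the truncated sums converge absolutely by the same bounds.
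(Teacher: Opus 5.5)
Your proposal is correct and follows essentially the same route as the paper. Both arguments use dyadic annuli with the cancellation (\ref{IntegraleNulle}) to subtract constants, the size bound, H\"older in $\varpi\,dy$ and the two-weight Poincar\'e inequality of Lemma \ref{Lem_Poincare2poids}. They then apply the upper Ahlfors bound, split at a radius into a maximal-function part and a Morrey/lower-Ahlfors part, and optimize that radius. The only cosmetic difference is that you allow an enlarged ball $cB$ on the gradient side and absorb it via the doubling of $\omega\in A_p$, whereas the paper applies the Poincar\'e inequality on the same ball.
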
%%%%%%%%%%%%%%%%%%%%%%%%%%%%%%%%%%%%%%%%%%%%%%%%%%%
Some remarks are in order here. First, note that the properties of the operator $T_{\Omega, \varpi}$ depend heavily on the behavior of the kernel $\Omega$ and the weight $\varpi$. Indeed, the condition (\ref{TailleKernel}) on the kernel $\Omega$ is rather classical in Calder\'on-Zygmund theory (see Chapter 8 of \cite{Grafakos} and the references therein), since it provides sufficient control over the singular integral operator. Note, however, that although a cancellation condition of the type $\displaystyle{\int_{\mathbb{G}}\Omega(x,y)dy}=0$ is also classical, here we assume the condition (\ref{IntegraleNulle}), which establishes a deep link between the kernel $\Omega$ and the weight $\varpi$ and which turns out to be crucial for our computations. Note also that, in sharp contrast to the classical Calder\'on-Zygmund theory, no regularity condition is imposed on the kernel $\Omega$. Regarding the properties of the weight $\varpi$, we only assume that the measure $\varpi(y) dy$ is doubling, as well as satisfying the upper Ahlfors condition (\ref{Upper_Ahlfors_condition}), and we do not require it to be a Muckenhoupt weight. All these conditions provide a quite general framework for the operator $T_{\Omega, \varpi}$.\\

Second, we remark that, assuming some regularity for a function $f$ (namely, that $\nabla f$ belongs to a weighted Morrey space), the conclusion of the previous theorem provides the pointwise estimate (\ref{PointWiseIneqFeli1}). Note that this estimate also involves a weighted maximal function of $\nabla f$. However, this is not a very restrictive constraint, since maximal functions are generally bounded on most classical function spaces. Indeed, since we are able to obtain a suitable \emph{pointwise} control, several functional inequalities can be easily deduced, as we will see in Section \ref{Secc_FuncIneq}.\\

Third, we remark that the behavior of the weights $\varpi$ and $\omega$ is differentiated. Indeed, while the weight $\varpi$ satisfies the \emph{upper} Ahlfors condition given in (\ref{Upper_Ahlfors_condition}), the weight $\omega$ satisfies the \emph{lower} Ahlfors condition (\ref{Lower_Ahlfors_condition}). This distinction between the properties of the two weights can be contrasted with the assumptions made in the previous article \cite{ChMarcociMarcoci2}, where the measures involved satisfy both the upper \emph{and} the lower Ahlfors conditions. Note, however, that for technical reasons (see Lemma \ref{Lem_Poincare2poids} below), we will need the balance condition (\ref{HypotheseBalance}), which connects the weight $\varpi$ with the weight $\omega$.\\

Fourth, the pointwise estimate (\ref{PointWiseIneqFeli1}) can be connected to the sub-representation formulas studied in \cite{Hoang2} and to the pointwise estimates developed in \cite{ChMarcociMarcoci1}, \cite{Hoang}, \cite{Hoang1}, and \cite{Li}. However, the operator (\ref{Def_Operator}) considered here is quite different from the operators considered in those works, since the operator $T_{\Omega, \varpi}$ is built from a kernel $\Omega$ and a weight $\varpi$. In this sense, the results presented here are, to the best of our knowledge, new even in the Euclidean framework.\\

Finally, let us remark that the condition $N\geq 4$ is essentially related to the fact that one of the simplest, yet nontrivial, stratified Lie groups is the Heisenberg group $\mathbb{H}$, which is based on $\mathbb{R}^3$ but whose homogeneous dimension is $4$. Note also that the condition $\nu>N-1$ imposed on the power $\nu$, as well as the constraint (\ref{HypothesePointwise}) relating the parameters $N, \nu, \rho$, and $q$, are essentially technical, and we do not claim any optimality of these parameters. However, a different range of values for these parameters would probably require different techniques that are beyond the scope of this article.\\

To close this section, let us point out that, to the best of our knowledge, the study of this type of weighted singular integral operator in the setting of stratified Lie groups is new. \\
 
{\bf Organization of the article}. In Section \ref{Secc_StratLieGroups}, we present the basic properties of stratified Lie groups, while in Section \ref{Secc_FuncSpaces}, we recall the definitions and main properties of the function spaces used here in connection with Muckenhoupt weights. In Section \ref{Secc_Theo1}, we prove the main Theorem \ref{Theo1}, and Sections \ref{Secc_FuncIneq} and \ref{Secc_Appl} are devoted to some applications of the pointwise estimate (\ref{PointWiseIneqFeli1}). In particular, we study the uniqueness of the trivial solution for a rough version (\emph{i.e.}, with a rough drift) of the stationary Navier-Stokes equations over the Heisenberg group.
%%%%%%%%%%%%%%%%%%%%%%%%%%%%%%%%%%%%%%%%%%%%%%%%%%%
\mysection{Stratified Lie groups}\label{Secc_StratLieGroups}
Stratified Lie groups are natural generalizations of $\mathbb{R}^n$ when considering general dilation structures. Although stratified Lie groups share common features with $\mathbb{R}^n$, there are some important differences that must be taken into account. For example, these groups are no longer abelian, and this fact requires care in some computations. Furthermore, from a geometric point of view, the underlying geometric structure of these groups can be very different from the Euclidean setting. It is therefore necessary to recall some basic facts about stratified Lie groups. For further information, see \cite{Folland2}, \cite{Stein2}, \cite{Varopoulos}, and the references therein. \\

We start with the notion of a \textit{homogeneous group} $\mathbb{G}$, which consists of $\mathbb{R}^{n}$ equipped with a Lie group structure, and we will always suppose that the origin is the identity. We define a \textit{dilation structure} by fixing integers $(a_{i})_{1\leq i\leq n}$ such that $1=a_{1}\leq... \leq a_{n}$ and by writing:
\begin{eqnarray}
\delta_{\alpha }:  \mathbb{R}^{n } & \longrightarrow & \mathbb{R}^{n } \label{dilat} \\
x &\longmapsto & \delta_{\alpha}[x]=(\alpha^{a_{1}}x_{1},...,\alpha^{a_{n}}x_{n}).\nonumber
\end{eqnarray}
We will often write $\alpha x$ instead of $\delta_{\alpha}[x]$, and $\alpha$ will always indicate a strictly positive real number. These dilations are, of course, group automorphisms. The Euclidean space $\mathbb{R}^{n}$, with its usual group structure and equipped with its usual dilations (i.e. $a_{i}=1$, for $i=1,...,n$), is a homogeneous group. Here is another example: if $x=(x_{1}, x_{2}, x_{3})$ is an element of $\mathbb{R}^{3}$, we can define a dilation by writing $\delta_{\alpha}[x]=(\alpha x_{1 }, \alpha x_{2}, \alpha^{2 } x_{3})$ for $\alpha>0$. Then, the well-suited group law with respect to this dilation is given by $x\cdot y=(x_{1}, x_{2}, x_{3})\cdot(y_{1}, y_{2}, y_{3})=(x_{1}+y_{1}, x_{2}+y_{2}, x_{3}+y_{3}+\frac{1}{2}(x_{1}y_{2}-y_{1}x_{2 }))$. Note in particular that this group law is no longer abelian. The triplet $(\mathbb{R}^{3}, \cdot,\delta )$ corresponds to the Heisenberg group $\mathbb{H}$, which is the first non-trivial example of a homogeneous group.\\ 

The \emph{homogeneous dimension} with respect to the dilation structure (\ref{dilat}) is given by
$$N=\displaystyle{\sum_{1\leq i\leq n}}a_{i }.$$
We observe that it is always greater than or equal to the topological dimension $n$, since each integer $a_{i}$ satisfies $a_{i}\geq 1$ for all $i=1,...,n$. For instance, in the Heisenberg group $\mathbb{H}^{1}$, we have $N=4$ and $n=3$, while in the Euclidean case, these two concepts coincide.\\

%%%%%%%%%%%%%%%%%%%%%%%%%%%%%%%%%%%%%%%%%%%%%%%%%%%
From the point of view of measure theory, homogeneous groups behave in a traditional way, since Lebesgue measure $dx$ is bi-invariant and coincides with the Haar measure. Thus, for any measurable subset $A$ of $\mathbb{G}$, we will denote its measure by $|A|$.\\

%%%%%%%%%%%%%%%%%%%%%%%%%%%%%%%%%%%%%%%%%%%%%%%%%%%
For a homogeneous group $\mathbb{G}=(\mathbb{R}^{n}, \cdot, \delta)$, we consider its Lie algebra $\mathfrak{g}$, whose elements can be viewed in two different ways: as \textit{left}-invariant vector fields or as \textit{right}-invariant vector fields. The left-invariant vector fields $(X_j)_{1\leq j\leq n}$ are determined by the formula
\begin{equation*}
(X_{j}f)(x)=\left.\frac{\partial f(x\cdot y)}{\partial y_{j}}\right|_{y=0}=\frac{\partial f}{\partial x_{j}}+\sum_{j<k}q^{k}_{j}(x)\frac{\partial f}{\partial x_{k}},
\end{equation*}
where $q^{k}_{j}(x)$ is a homogeneous polynomial of degree $a_{k}-a_{j}$ and $f$ is a smooth function on $\mathbb{G}$. From this formula, one easily deduces that these vector fields are homogeneous of degree $a_{j}$, and we have $X_{j}\left(f(\alpha x)\right)=\alpha^{a_{j}}(X_{j}f)(\alpha x)$. \\

\noindent Now, a homogeneous group $\mathbb{G}$ is \emph{stratified} if its Lie algebra $\mathfrak{g}$ can be decomposed as a sum of linear subspaces
$\mathfrak{g}=\bigoplus_{1\leq j\leq k } E_{j}$ such that $E_{1}$ generates the algebra $\mathfrak{g}$ and $[E_{1}, E_{j}]=E_{j+1}$ for $1\leq j < k$, $[E_{1}, E_{k}]=\{0\}$, and $E_{k}\neq\{0\}$, while $E_{j}=\{0\}$ if $j>k$. Here, $[E_{1}, E_{j}]$ indicates the subspace of $\mathfrak{g}$ generated by the elements $[U, V]=UV-VU$ with $U\in E_{1}$ and $V\in E_{j}$. The integer $k$ is called the \emph{degree of stratification} of $\mathfrak{g}$. For example, on the Heisenberg group $\mathbb{H}$, we have $k=2$, while in the Euclidean case, $k=1$.\\

We will suppose from now on that $\mathbb{G}$ is \textbf{stratified} with homogeneous dimension\footnote{The lower bound $N\geq 4$ corresponds to the homogeneous dimension of the Heisenberg group $\mathbb{H}$, which is the simplest non-trivial stratified Lie group.} $N\geq 4$. Within this framework, we will fix once and for all the family of vector fields
\begin{equation}\label{Hormander}
{\bf X}=\{X_1,...,X_m\},
\end{equation}
such that $a_{1}=a_{2}=\ldots=a_{m}=1$ $(m<n)$. Then, the family $\textbf{X}$ is a basis of $E_{1}$ and generates the Lie algebra $\mathfrak{g}$, which is precisely H\"ormander's condition (see \cite{Folland2} and \cite{Varopoulos}). This particular choice ensures several important properties. In particular, the family $\textbf{X}$ is associated with the Carnot-Carath\'eodory distance $d$, which is left-invariant and compatible with the topology on $\mathbb{G}$ (see \cite{Varopoulos} for more details). Thus, for any $x\in \mathbb{G}$ and $r>0$, we can define open balls by
\begin{equation}\label{DefBall}
B(x,r)=\{y\in \mathbb{G}: d(x,y)<r\}.
\end{equation}
We remark that, by simple homogeneity arguments, stratified Lie groups have polynomial volume growth, since we have $|B(\cdot, r)|=r^N |B(\cdot, 1)|$.\\

We define now the \textit{gradient} on $\mathbb{G}$ using vector fields of homogeneity degree equal to one (\textit{i.e.}, those composing the family $\textbf{X}$), by setting $\nabla = (X_{1},...,X_{m})$. This operator is, of course, left-invariant and homogeneous of degree $1$. The length of the gradient is given by the formula
$$|\nabla f|= \left((X_{1}f)^{2}+... +(X_{m}f)^{2 } \right)^{1/2}.$$

For more details concerning stratified Lie groups see the books \cite{Folland2},  \cite{Stein2}, \cite{Varopoulos} and the articles \cite{Folland0}, \cite{Saka} as well as the references therein.
%%%%%%%%%%%%%%%%%%%%%%%%%%%%%%%%%%%%%%%%%%%%%%%%%%%
\mysection{Functional spaces and some inequalities}\label{Secc_FuncSpaces}
In this section, we recall the definitions and some well-known properties of the function spaces, Muckenhoupt weights, and maximal function operators that will be used in the sequel.
%%%%%%%%%%%%%%%%%%%%%%%%%%%%%%%%%%%%%%%%%%%%%%%%%%%
%%%%%%%%%%%%%%%%%%%%%%%%%%%%%%%%%%%%%%%%%%%%%%%%%%%
\subsubsection*{Weighted functional spaces}
We now consider weighted function spaces and recall some of their properties. 
\begin{itemize}
\item {\bf Weighted Lebesgue spaces}. For $1\leq q<+\infty$, for a weight $\omega \in A_\sigma$ with $1< \sigma<+\infty$, and for $A=\mathbb{G}$ or a measurable subset $A \subset \mathbb{G}$, the weighted Lebesgue spaces $L^q_\omega(A)$ are defined by the condition
$$\|f\|_{L^q_\omega}=\left(\int_{A}|f(x)|^q\omega(x)dx\right)^{\frac{1}{q}}<+\infty.$$
Note that, for some parameter $1<\alpha<+\infty$, we have the property
\begin{equation}\label{ProprietePuissanceLebesgue}
\||f|^\alpha\|_{L^q_\omega}=\|f\|_{L^{\alpha q}_\omega}^\alpha.
\end{equation}
We also remark that, in this weighted setting, we have the following H\"older inequality for $\frac{1}{q}+\frac{1}{q'}=1$, for some measurable set $A\subseteq\mathbb{G}$ and for $\omega\in A_\sigma$ with $1\leq \sigma<+\infty$:
$$\int_{A}|f(x)|\omega(x)dx\leq \|f\|_{L^q_\omega(A)}\|f\|_{L^{q'}_\omega(A)},$$
and note that this inequality also holds for generic weights.\\

We recall now that the weighted maximal function $\mathscr{M}_\omega$ given in (\ref{Weightedmaximalfunctions}) is bounded on the weighted Lebesgue spaces, and we have
\begin{equation}\label{BoundednessMaxWeightedFuncLebesgue}
\|\mathscr{M}_\omega(f)\|_{L^q_\omega}\leq C\|f\|_{L^q_\omega},
\end{equation}
where $1<q<+\infty$ and $\omega\in A_\sigma$ for $1< \sigma <+\infty$. See a proof of this fact in \cite[Theorem 2.6, p. 146]{GCRdF}, \cite{Pan} for a proof of this estimate in the more general setting of spaces of homogeneous type, and also \cite[Theorem 3.1]{Komori}. 
%%%%%%%%%%%%%%%%%%%%%%%%%%%%%%%%%%%%%%%%%%%%%%%%%%%
\item {\bf Weighted Morrey spaces}. The spaces $\mathcal{M}_\omega^{p,q}(\mathbb{G})$ defined in expression (\ref{Def_Weighted_Morrey_space}) satisfy the following useful property:
\begin{equation}\label{ProprietePuissanceMorrey}
\||f|^\alpha\|_{\mathcal{M}_\omega^{p,q}}=\|f\|_{\mathcal{M}_\omega^{\alpha p, \alpha q}}^\alpha.
\end{equation}
We also have the estimate
$$\|f\|_{\mathcal{M}_\omega^{p_1,q}}\leq \|f\|_{\mathcal{M}_\omega^{p_0,q}},$$
for all $1<p_0\leq p_1<q$, which gives the space inclusion $\mathcal{M}_\omega^{p_0,q}(\mathbb{G})\subset \mathcal{M}_\omega^{p_1,q}(\mathbb{G})$. Moreover, following\footnote{The proof presented in \cite{Komori} can be easily adapted to the setting of stratified Lie groups studied in this article since the weight $\omega$ considered here is doubling.} \cite[Theorem 3.1]{Komori}, the maximal function $\mathscr{M}_\omega$ is also bounded on these weighted Morrey spaces, and we have
\begin{equation}\label{BoundednessMaxWeightedFuncMorrey}
\|\mathscr{M}_\omega(f)\|_{\mathcal{M}_\omega^{p,q}}\leq C\|f\|_{\mathcal{M}_\omega^{p,q}},
\end{equation}
where $\omega \in A_\sigma$ for $1< \sigma<+\infty$ and where $1<p\leq q<+\infty$. 
\end{itemize}
%%%%%%%%%%%%%%%%%%%%%%%%%%%%%%%%%%%%%%%%%%%%%%%%%%%
\subsubsection*{A double weight inequality}
We need to recall an important inequality that will be crucial in the sequel. 
%%%%%%%%%%%%%%%%%%%%%%%%%%%%%%%%%%%%%%%%%%%%%%%%%%%
\begin{Lemme}\label{Lem_Poincare2poids}
For a function $f\in \mathcal{C}^\infty(\mathbb{R}^n)$ and for every ball $B(x,r)$ such that $B(x,r)\subset supp(f)$, we have the following \emph{Poincaré-Sobolev inequality}:
\begin{equation}\label{PoincareSobolev_inequality}
\left(\frac{1}{\varpi(B(x,r))}\int_{B(x,r)}|f(y)-f_{B_r}|^{q}\varpi(y)dy\right)^{\frac{1}{q}}\leq Cr\left(\frac{1}{\omega(B(x,r))}\int_{B(x,r)}|\nabla f(y)|^{p}\omega(y)dy\right)^{\frac{1}{p}}. 
\end{equation}
for $1<p<q<+\infty$, where the weights $\omega$ and $\varpi$ satisfy the following \emph{balance condition}:
\begin{equation}
\frac{r(B)}{r(B_0)}\left(\frac{\varpi(B)}{\varpi(B_0)}\right)^{\frac{1}{q}}\leq C \left(\frac{\omega(B_0)}{\omega(B)}\right)^{\frac{1}{p}},
\end{equation}
for all metric balls $B\subset cB_0$, where $c,C$ are large constants.
\end{Lemme}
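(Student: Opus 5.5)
This is a two-weight Poincaré–Sobolev inequality of the type established by Chanillo–Wheeden, Franchi–Lu–Wheeden and Franchi–Pérez–Wheeden for Carnot–Carathéodory spaces. I would not prove it from scratch. Instead I would derive it from the representation formula combined with a two-weight bound for the local fractional integral, tested on balls.

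\textbf{Step 1: pointwise representation.} Since the family $\mathbf{X}$ satisfies Hörmander's condition and $d$ is the associated Carnot–Carathéodory distance, on stratified groups one has, for every $y\in B=B(x,r)$,
$$|f(y)-f_{B}|\leq C\int_{cB}\frac{d(y,z)}{|B(y,d(y,z))|}|\nabla f(z)|dz=C\int_{cB}\frac{|\nabla f(z)|}{d(y,z)^{N-1}}dz .$$
This follows from the Jerison/Varopoulos Poincaré inequality, which holds with the same ball thanks to the group structure and a Boman chain argument. The quantity $|f(y)-f_B|$ is therefore dominated by the truncated fractional integral $I_1^{cB}(|\nabla f|)(y)$.

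\textbf{Step 2: two-weight $L^p_\omega\to L^q_\varpi$ bound for $I_1$ localized to $cB$.} I would split $I_1^{cB}$ dyadically, as
$$\sum_{k}2^{-k}r\cdot \frac{1}{|B(y,2^{-k}r)|}\int_{B(y,2^{-k}r)}|\nabla f|.$$
A Sawyer/Kerman–Sawyer-type testing argument, or the Franchi–Pérez–Wheeden self-improvement scheme, then shows that
$$\|I_1^{cB}g\|_{L^q_\varpi(B)}\leq C\, r\,\varpi(B)^{1/q}\omega(B)^{-1/p}\|g\|_{L^p_\omega(cB)},$$
provided the balance condition holds for all sub-balls $B'\subset cB_0$. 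This is exactly the hypothesis stated in the lemma. Concretely, the balance condition says
$$r(B')\varpi(B')^{1/q}\omega(B')^{-1/p}\leq C\,r(B_0)\varpi(B_0)^{1/q}\omega(B_0)^{-1/p}.$$
The scheme is as follows. First prove a weak-type $L^p_\omega$ estimate, or a Poincaré estimate with $q$ replaced by a truncation, on each ball via the $A_p$ property of $\omega$. The $A_p$ property gives $\frac{1}{|B'|}\int_{B'}|g|\leq C\left(\frac{1}{\omega(B')}\int_{B'}|g|^p\omega\right)^{1/p}$. Then use the balance condition to sum over the Whitney/dyadic decomposition, with $q>p$ supplying the needed $\ell^p\subset\ell^q$ summability. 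Finally, apply the truncation method of Maz'ya type to upgrade the weak-type bound to the strong type, using doubling of $\varpi$ and $\omega$.

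\textbf{Step 3: conclusion.} Dividing by $\varpi(B)^{1/q}$ and inserting $\omega(B)^{-1/p}$ gives (\ref{PoincareSobolev_inequality}), with $\nabla f$ integrated on $cB$. The enlarged ball is then reduced to $B$ itself because Carnot–Carathéodory balls on stratified groups are John domains (Boman chain condition). The hypothesis $B(x,r)\subset supp(f)$ only serves to guarantee that everything is taken inside the region where $f$ is smooth.

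The main obstacle I expect is Step 2, namely summing the dyadic pieces using only the balance condition, without any Muckenhoupt assumption on $\varpi$. I would follow Franchi–Pérez–Wheeden (``Self-improving properties of John–Nirenberg and Poincaré inequalities on spaces of homogeneous type''), citing their Theorem directly: it takes exactly this balance condition with $q>p$ and the doubling of $\varpi$ as input. It yields the $L^q_\varpi$ Poincaré inequality from the $L^1$ Poincaré inequality of Step 1 together with the $A_p$ property of $\omega$.
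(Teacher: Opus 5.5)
The paper does not prove this lemma; it only cites Han--Huang (Theorem 1.1) and Lu--Wheeden. Your outline is the standard Chanillo--Wheeden / Franchi--Lu--Wheeden argument behind those references: representation formula, the $A_p$--H\"older bound, summation over sub-balls using the balance condition with $q>p$, then truncation. So your route is the right one. There is, however, one step that does not hold as written: the claim ``This is exactly the hypothesis stated in the lemma.''

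\textbf{The balance condition is inverted.} Clearing denominators in the lemma's displayed condition gives
$$r(B)\,\varpi(B)^{1/q}\,\omega(B)^{1/p}\le C\,r(B_0)\,\varpi(B_0)^{1/q}\,\omega(B_0)^{1/p},$$
with exponent $+1/p$, not the $-1/p$ you wrote. For $B\subset cB_0$ and doubling weights we have $r(B)\lesssim r(B_0)$, $\varpi(B)\lesssim\varpi(B_0)$ and $\omega(B)\lesssim\omega(B_0)$. Hence the left side of the printed condition is bounded above and its right side is bounded below, so the condition holds automatically and carries no information. In particular it cannot give the summability your Step 2 relies on, namely
$$\sum_i a(B_i)^q\varpi(B_i)\le C\,a(B)^q\varpi(B),\qquad a(B')=r(B')\Bigl(\frac{1}{\omega(B')}\int_{B'}|\nabla f|^p\omega\Bigr)^{1/p}.$$
That estimate needs precisely $r(B_i)\varpi(B_i)^{1/q}\omega(B_i)^{-1/p}\le C\,r(B)\varpi(B)^{1/q}\omega(B)^{-1/p}$.

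In fact the lemma as printed is false. Take $\varpi=\omega\equiv1$, $1<p<N$ and $q>Np/(N-p)$. The printed condition then reads $t^{1+N/q+N/p}\le C$ with $t=r(B)/r(B_0)$ bounded, so it holds. Yet the Poincar\'e--Sobolev inequality fails: test it on smoothed truncations of $\|y\|^{-\alpha}$ on $B(0,1)$ with $N/q<\alpha<N/p-1$. For these the gradients stay bounded in $L^p$ while $\|f-f_B\|_{L^q}\to\infty$.

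\textbf{What your argument actually proves.} It establishes the lemma under the balance condition of the literature,
$$\frac{r(B)}{r(B_0)}\Bigl(\frac{\varpi(B)}{\varpi(B_0)}\Bigr)^{1/q}\le C\Bigl(\frac{\omega(B)}{\omega(B_0)}\Bigr)^{1/p},$$
together with $\varpi$ doubling and $\omega\in A_p$. Neither of these last two is stated in the lemma; both come from the setting of Theorem 1. For $\varpi=\omega\equiv1$ this corrected condition reduces to $q\le Np/(N-p)$, consistent with the unweighted remark after the lemma. You should say explicitly that you are correcting an inverted ratio in the hypothesis, rather than asserting that your condition is the lemma's. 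With that correction made, your sketch is sound.
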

\noindent See \cite[Theorem 1.1]{HanHuang}, \cite{LuWheeded} and the references therein for more details about this inequality.
%%%%%%%%%%%%%%%%%%%%%%%%%%%%%%%%%%%%%%%%%%%%%%%%%%%
\begin{Remarque}\label{PoinKSansPoids}
Let us note that when $\varpi=\omega=1$ (\emph{i.e.}, when no weights are considered), we have the following version of the inequality (\ref{PoincareSobolev_inequality}):
\begin{equation}\label{PoincareSobolev_inequality1}
\left(\frac{1}{|B(x,r)|}\int_{B(x,r)}|f(y)-f_{B_r}|^{q}dy\right)^{\frac{1}{q}}\leq Cr\left(\frac{1}{|B(x,r)|}\int_{B(x,r)}|\nabla f(y)|^{p}dy\right)^{\frac{1}{p}},
\end{equation}
where $1<p<N$ and $q=\frac{Np}{N-p}$. 
\end{Remarque}
\noindent See \cite{Lu} for a proof of this fact, as well as the references therein, for more details on this unweighted inequality in the setting of stratified Lie groups.\\
%%%%%%%%%%%%%%%%%%%%%%%%%%%%%%%%%%%%%%%%%%%%%%%%%%%

To end this section, we recall some results in the unweighted setting that will be used in Section \ref{Secc_Appl} below. To start with, we recall that, associated with a H\"ormander family ${\bf X}$ fixed in (\ref{Hormander}), we can define a sub-Laplacian by $\Delta=\sum_{j=1}^mX_j^2$ and its associated semigroup $H_t=e^{-t\Delta}$, which admits a convolution kernel $h_t$. We now define, for $s>0$, the fractional powers of this operator by the formulas
$$(-\Delta)^{\frac{s}{2}}(f)=\frac{1}{\Gamma(k-\frac{s}{2})}\int_{0}^{+\infty}t^{k-\frac{s}{2}-1}(-\Delta)^{k}H_{t}(f)dt\qquad\mbox{and}\qquad (-\Delta)^{\frac{-s}{2}}(f)=\frac{1}{\Gamma(s)}\int_{0}^{+\infty}t^{\frac{s}{2}-1}H_{t}(f)dt,$$
where $k$ is the smallest integer such that $k>\frac{s}{2}$.\\

For $s\in \mathbb{R}$ and $1<p<+\infty$, the homogeneous Sobolev spaces $\dot{W}^{s,p}(\mathbb{G})$ are now defined by the condition
$$\|f\|_{\dot{W}^{s,p}}=\|(-\Delta)^{\frac{s}{2}}(f)\|_{L^p}<+\infty.$$
Note that when $s$ is an integer, then we have the following identification 
$$\sum_{|I|=s}\|X^{I}(f)\|_{L^p}\simeq \|(-\Delta)^{\frac{|I|}{2}}(f)\|_{L^p},$$
where $X^I=X_1^{i_1}...X_{m}^{i_m}$ and $I = (i_1, ..., i_m)$ is a multi-index. \\

Recall that we have the classical Sobolev inequalities given by the estimates
\begin{equation}\label{SobolevIneq}
\|f\|_{L^q}\leq C\|\nabla f\|_{L^p},
\end{equation}
where $1<p<N$ and $q=\frac{Np}{N-p}$, as well as the Hardy-Littlewood-Sobolev inequalities
\begin{equation}\label{HLSobolevIneq}
\|(-\Delta)^{\frac{-1}{2}}(f)\|_{L^q}\leq C\|f\|_{L^p},
\end{equation}
where $1<p<N$ and $q=\frac{Np}{N-p}$. To finish, we point out that the Riesz transforms $(-\Delta)^{-\frac{1}{2}}X_j$ are bounded in the Lebesgue spaces as we have the control 
\begin{equation}\label{RieszTransforms}
\|(-\Delta)^{-\frac{1}{2}}X_j(f)\|_{L^p}\leq C\|f\|_{L^p},
\end{equation}
for all $1<p<+\infty$.\\

For more details about functional spaces and harmonic analysis results, see \cite{Folland0}, \cite{Folland2}, \cite{Saka}, \cite{Stein2},  \cite{Varopoulos} and the references therein.  
%%%%%%%%%%%%%%%%%%%%%%%%%%%%%%%%%%%%%%%%%%%%%%%%%%%
%%%%%%%%%%%%%%%%%%%%%%%%%%%%%%%%%%%%%%%%%%%%%%%%%%%
\mysection{Proof of  Theorem \ref{Theo1}}\label{Secc_Theo1}

For some fixed $t>0$, we consider the operator 
$$T^t_{\Omega, \varpi}(f)(x)=\int_{\{t<d(x,y)\}}\Omega(x,y)f(y)\varpi(y)dy,$$
and let us write 
$$T^*_{\Omega, \varpi}(f)(x)=\underset{t>0}{\sup}|T^t_{\Omega, \varpi}(f)(x)|=\underset{t>0}{\sup}\left|\int_{\{t<d(x,y)\}}\Omega(x,y)f(y)\varpi(y)dy\right|.$$
Note that with this notation, we have the control 
$$T_{\Omega, \varpi}(f)(x)\leq |T_{\Omega, \varpi}(f)(x)|\leq T^*_{\Omega, \varpi}(f)(x),$$ 
and thus we will focus our study to obtain an uniform estimate (with respect to the parameter $t>0$) of the operator $T^t_{\Omega, \varpi}$.\\

Now, for a function $f\in L^1_{loc}(\mathbb{G})$, regular enough, and for some $k_0\in \mathbb{Z}$ so that $2^{k_0-2}<t \leq 2^{k_0-1}$, we write
$$T^t_{\Omega, \varpi}(f)(x)=\int_{\{t<d(x,y)\leq 2^{k_0-1}\}}\Omega(x,y)f(y)\varpi(y)dy+\sum_{k\geq k_0}\int_{\{2^{k-1}<d(x,y)\leq 2^{k}\}}\Omega(x,y)f(y)\varpi(y)\varpi(y)dy.$$
Since we have the cancellation condition $\displaystyle{\int_{\{a<d(x,y)<b\}}}\Omega(x,y)\varpi(y)dy=0$ given in (\ref{IntegraleNulle}), we can introduce some constants, that we define later on, in the previous expression to obtain
$$T^t_{\Omega, \varpi}(f)(x)=\int_{\{t<d(x,y)\leq 2^{k_0-1}\}}\Omega(x,y)(f(y)-c_{k_0})\varpi(y)dy+\sum_{k\geq k_0}\int_{\{2^{k-1}<d(x,y)\leq 2^{k}\}}\Omega(x,y)(f(y)-c_k)\varpi(y)dy,$$
from which we deduce the inequality
$$|T^t_{\Omega, \varpi}(f)(x)|\leq\sum_{k\in\mathbb{Z}}\int_{\{2^{k-1}<d(x,y)\leq 2^{k}\}}|\Omega(x,y)| |f(y)-c_k|\varpi(y)dy.$$
Recall that by the hypothesis (\ref{TailleKernel}) we have the control $|\Omega(x,y)|\leq \frac{C_\Omega}{d(x,y)^N}$ for all $x\neq y$, we can thus write (since we integrate over the set $\{2^{k-1}<d(x,y)\leq 2^{k}\}$)
\begin{eqnarray*}
|T^t_{\Omega, \varpi}(f)(x)|&\leq& C_\Omega\sum_{k\in\mathbb{Z}}\int_{\{2^{k-1}<d(x,y)\leq 2^{k}\}}\frac{1}{d(x,y)^N}{|f(y)-c_k|}\varpi(y)dy\\
&\leq &C_\Omega\sum_{k\in\mathbb{Z}}\int_{\{2^{k-1}<d(x,y)\leq 2^{k}\}}\frac{1}{2^{(k-1)N}}{|f(y)-c_k|}\varpi(y)dy,
\end{eqnarray*}
which we rewrite as follows 
$$|T^t_{\Omega, \varpi}(f)(x)|\leq C_\Omega\cdot 2^N\sum_{k\in\mathbb{Z}}\frac{1}{2^{kN}}  \int_{B(x,2^{k})}|f(y)-c_k|\varpi(y)dy.$$
We apply now the H\"older inequality in the weighted setting with $\frac{1}{q}+\frac{1}{q'}=1$ and $1<q<N$, to obtain
\begin{eqnarray*}
|T^t_{\Omega, \varpi}(f)(x)|&\leq &C_\Omega\cdot 2^N\sum_{k\in\mathbb{Z}}\frac{1}{2^{kN}}  \varpi(B(x, 2^k))^{\frac{1}{q'}}\left(\int_{B(x,2^{k})}|f(y)-c_k|^q\varpi(y)dy\right)^{\frac{1}{q}}\\
&\leq & C_\Omega\cdot 2^N\sum_{k\in\mathbb{Z}}\frac{1}{2^{kN}}  \varpi(B(x, 2^k))\left(\frac{1}{\varpi(B(x, 2^k))}\int_{B(x,2^{k})}|f(y)-c_k|^q\varpi(y)dy\right)^{\frac{1}{q}}.
\end{eqnarray*}
Assuming now the upper Ahlfors condition for the weight $\varpi$, we have $\varpi(B(x, 2^k))\leq C2^{k\nu}$ for some power $0<\nu<+\infty$ such that $N-1< \nu$ (recall the condition (\ref{Upper_Ahlfors_condition}) above), and thus we have
\begin{equation}\label{AvantPoinK}
|T^t_{\Omega, \varpi}(f)(x)|\leq  C \sum_{k\in\mathbb{Z}}\frac{1}{2^{kN}}  2^{k\nu}\left(\frac{1}{\varpi(B(x, 2^k))}\int_{B(x,2^{k})}|f(y)-c_k|^q\varpi(y)dy\right)^{\frac{1}{q}}.
\end{equation}
At this point we use the two-weights $(\varpi,\omega)$-Poincaré inequality given in the Lemma \ref{Lem_Poincare2poids}, to obtain 
\begin{eqnarray}\label{ApresPoinK}
|T^t_{\Omega, \varpi}(f)(x)|&\leq &C\sum_{k\in\mathbb{Z}}\frac{1}{2^{k(N-\nu)}}2^k\left(\frac{1}{\omega(B(x,2^{k}))} \int_{B(x, 2^{k})}|\nabla f(y)|^p\omega(y)dy\right)^{\frac{1}{p}}\\
&\leq & C\sum_{k\in\mathbb{Z}}\frac{1}{2^{k(N-1-\nu)}}\left(\frac{1}{\omega(B(x,2^{k}))} \int_{B(x, 2^{k})}|\nabla f(y)|^p\omega(y)dy\right)^{\frac{1}{p}},\notag
\end{eqnarray}
where $1<p<q<N$. We decompose now the previous sum in two parts: 
\begin{eqnarray*}
|T^t_{\Omega, \varpi}(f)(x)|&\leq & C\sum_{k\leq \lfloor \log_2(\mathcal{K})\rfloor}\frac{1}{2^{k(N-1-\nu)}}\left(\frac{1}{\omega(B(x,2^{k}))} \int_{B(x, 2^{k})}|\nabla f(y)|^p\omega(y)dy\right)^{\frac{1}{p}}\\
&&+ C\sum_{k\geq \lfloor \log_2(\mathcal{K})\rfloor}\frac{1}{2^{k(N-1-\nu)}}\left(\frac{1}{\omega(B(x,2^{k}))} \int_{B(x, 2^{k})}|\nabla f(y)|^p\omega(y)dy\right)^{\frac{1}{p}},
\end{eqnarray*}
where $0<\mathcal{K}<+\infty$ is a parameter that will be fixed later on and where $\lfloor\cdot \rfloor$ denotes the integer part of a real number.\\

Recall now that by hypothesis we have $N-1-\nu<0$ (or equivalently $\nu+1-N>0$) and we can write
\begin{eqnarray*}
|T^t_{\Omega, \varpi}(f)(x)|&\leq & C\underbrace{\sum_{k\leq \lfloor \log_2(\mathcal{K})\rfloor}2^{k(\nu+1-N)}\left(\frac{1}{\omega(B(x,2^{k}))} \int_{B(x, 2^{k})}|\nabla f(y)|^p\omega(y)dy\right)^{\frac{1}{p}}}_{(A)}\\
&&+ C\underbrace{\sum_{k\geq \lfloor \log_2(\mathcal{K})\rfloor}2^{k(\nu+1-N)}\left(\frac{1}{\omega(B(x,2^{k}))} \int_{B(x, 2^{k})}|\nabla f(y)|^p\omega(y)dy\right)^{\frac{1}{p}}}_{(B)},
\end{eqnarray*}
and we will study these terms separately. For the term $(A)$ above, we write 
\begin{eqnarray*}
(A)&=&\sum_{k\leq \lfloor \log_2(\mathcal{K})\rfloor}2^{k(\nu+1-N)}\left(\frac{1}{\omega(B(x,2^{k}))} \int_{B(x, 2^{k})}|\nabla f(y)|^p\omega(y)dy\right)^{\frac{1}{p}}\\
&\leq &\sum_{k\leq \lfloor \log_2(\mathcal{K})\rfloor}2^{k(\nu+1-N)}\mathscr{M}_{\omega}(|\nabla f|^p)^{\frac{1}{p}}(x),
\end{eqnarray*}
where we used the definition of the weighted maximal function $\mathscr{M}_{\omega}$ given in the expression (\ref{Weightedmaximalfunctions}) above. We obtain then 
$$(A)\leq \mathscr{M}_{\omega}(|\nabla f|^p)^{\frac{1}{p}}(x) \sum_{k\leq \lfloor \log_2(\mathcal{K})\rfloor}2^{k(\nu+1-N)},$$
and computing the sum, we have
$$(A)\leq C\mathscr{M}_{\omega}(|\nabla f|^p)^{\frac{1}{p}}(x) \mathcal{K}^{\nu+1-N}.$$
For the term $(B)$, we write 
\begin{eqnarray*}
(B)&=&\sum_{k\geq \lfloor \log_2(\mathcal{K})\rfloor}2^{k(\nu+1-N)}\left(\frac{1}{\omega(B(x,2^{k}))} \int_{B(x, 2^{k})}|\nabla f(y)|^p\omega(y)dy\right)^{\frac{1}{p}}\\
&=&\sum_{k\geq \lfloor \log_2(\mathcal{K})\rfloor}2^{k(\nu+1-N)}\left(\frac{\omega(B(x,2^{k}))^{-\frac{p}{q}}}{\omega(B(x,2^{k}))^{1-\frac{p}{q}}} \int_{B(x, 2^{k})}|\nabla f(y)|^p\omega(y)dy\right)^{\frac{1}{p}}\\
&=&\sum_{k\geq \lfloor \log_2(\mathcal{K})\rfloor}2^{k(\nu+1-N)}\omega(B(x,2^{k}))^{-\frac{1}{q}}\left(\frac{1}{\omega(B(x,2^{k}))^{1-\frac{p}{q}}} \int_{B(x, 2^{k})}|\nabla f(y)|^p\omega(y)dy\right)^{\frac{1}{p}}.
\end{eqnarray*}
We use now the lower Ahlfors property of the weight $\omega$, \emph{i.e.} $\omega(B(x,2^{k}))^{-\frac{1}{q}}\leq C (2^k)^{-\frac{\rho}{q}}$ (recall the formula (\ref{Lower_Ahlfors_condition}) above), to write
$$(B)\leq C\sum_{k\geq \lfloor \log_2(\mathcal{K})\rfloor}2^{k(\nu+1-N)}2^{-k\frac{\rho}{q}}\left(\frac{1}{\omega(B(x,2^{k}))^{1-\frac{p}{q}}} \int_{B(x, 2^{k})}|\nabla f(y)|^p\omega(y)dy\right)^{\frac{1}{p}},$$
and by the definition of the weighted Morrey spaces $\mathcal{M}_{\omega}^{p,q}(\mathbb{G})$, given in the formula (\ref{Def_Weighted_Morrey_space}), we have
\begin{eqnarray*}
(B)&\leq &C\sum_{k\geq \lfloor \log_2(\mathcal{K})\rfloor}2^{k(\nu+1-N-\frac{\rho}{q})}\|\nabla f \|_{\mathcal{M}_{\omega}^{p,q}}\\
&\leq & C\|\nabla f \|_{\mathcal{M}_{\omega}^{p,q}}\sum_{k\geq \lfloor \log_2(\mathcal{K})\rfloor}2^{k(\nu+1-N-\frac{\rho}{q})}.
\end{eqnarray*}
At this point we remark that by the hypothesis (\ref{HypothesePointwise}) we have $\nu+1-N-\frac{\rho}{q}<0$ , and thus the previous sum converges and we obtain
$$(B)\leq  C \|\nabla f \|_{\mathcal{M}_{\omega}^{p,q}}\mathcal{K}^{\nu+1-N-\frac{\rho}{q}}.$$
With the estimates for the terms $(A)$ and $(B)$, we have
$$|T^t_{\Omega, \varpi}(f)(x)|\leq C \mathscr{M}_{\omega}(|\nabla f|^p)^{\frac{1}{p}}(x)  \mathcal{K}^{\nu+1-N}+C \|\nabla f \|_{\mathcal{M}_{\omega}^{p,q}}\mathcal{K}^{\nu+1-N-\frac{\rho}{q}},$$
now we fix the parameter $\mathcal{K}$ by setting $\mathcal{K}=\left(\frac{\|\nabla f \|_{\mathcal{M}_{\omega}^{p,q}}}{\mathscr{M}_{\omega}(|\nabla f|^p)^{\frac{1}{p}}(x)}\right)^{\frac{q}{\rho}}$ and we obtain the estimate 
$$|T^t_{\Omega, \varpi}(f)(x)|\leq C\left(\mathscr{M}_{\omega}(|\nabla f|^p)^{\frac{1}{p}}(x)\right)^{1-\frac{q}{\rho}(\nu+1-N)} \|\nabla f \|_{\mathcal{M}_{\omega}^{p,q}}^{\frac{q}{\rho}(\nu+1-N)},$$
and this ends the proof of the Theorem \ref{Theo1} since we have $0<\theta=\frac{q}{\rho}(\nu+1-N)<1$ and since we have the uniform (with respect to the truncation parameter $t$) estimate
$T_{\Omega, \varpi}(f)(x)\leq |T^t_{\Omega, \varpi}(f)(x)|$. \hfill $\blacksquare$
%%%%%%%%%%%%%%%%%%%%%%%%%%%%%%%%%%%%%%%%%%%%%%%%%%%
\begin{Remarque}\label{InegaliteSansPoids}
Let us remark that if we set $\varpi=\omega=1$, \emph{i.e.} when an unweighted setting is considered, then we will have $\nu=\rho=N$ and the corresponding measure is the usual Lebesgue measure that satisfies the upper Ahlfors condition (\ref{Upper_Ahlfors_condition}) as well as the lower Ahlfors condition (\ref{Lower_Ahlfors_condition}).
\end{Remarque}
%%%%%%%%%%%%%%%%%%%%%%%%%%%%%%%%%%%%%%%%%%%%%%%%%%%
In this unweighted setting, we obtain the following pointwise estimate
%%%%%%%%%%%%%%%%%%%%%%%%%%%%%%%%%%%%%%%%%%%%%%%%%%%
\begin{Corollaire}[\bf Unweighted pointwise inequality]\label{CorollaireEstimationPonctuelle}
Let $\mathbb{G}$ be a stratified Lie group with homogeneous dimension $N\geq 4$. Consider the operator $T_{\Omega}$ defined in (\ref{Def_Operator}), where the kernel $\Omega$ satisfies the conditions (\ref{TailleKernel}) and (\ref{IntegraleNulle}) with $\varpi\equiv 1$.\\
 
\noindent Let $f:\mathbb{G}\longrightarrow \mathbb{R}$ be a function such that its gradient $\nabla f$ belongs to the Morrey space $\mathcal{M}^{p,q}(\mathbb{G})$ with $1<p<\frac{N}{2}$ and $q=\frac{Np}{N-p}$, then we obtain the following pointwise inequality
\begin{equation}\label{PointwiseUnweighted}
T_{\Omega}(f)(x)\leq C\left(\mathscr{M}(|\nabla f|^p)^{\frac{1}{p}}(x) \right)^{1-\frac{q}{N}}\|\nabla f\|_{\mathcal{M}^{p,q}}^{\frac{q}{N}}.
\end{equation}
\end{Corollaire}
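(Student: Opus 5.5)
The plan is to obtain the corollary as the special case $\varpi=\omega\equiv 1$ of Theorem \ref{Theo1}. Rather than check every hypothesis of the theorem abstractly, I will rerun its proof in this case and use the unweighted Poincaré–Sobolev inequality of Remark \ref{PoinKSansPoids} in place of Lemma \ref{Lem_Poincare2poids}.

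\emph{Checking the parameters.} By Remark \ref{InegaliteSansPoids}, Lebesgue measure satisfies the upper Ahlfors condition (\ref{Upper_Ahlfors_condition}) with $\nu=N$. It also satisfies the lower Ahlfors condition (\ref{Lower_Ahlfors_condition}) with $\rho=N$, because $|B(x,r)|=r^N|B(\cdot,1)|$. Hence $\nu>N-1$ holds and $\nu+1-N=1$. The constant weight $\omega\equiv 1$ lies in every $A_p$, and then $\mathscr{M}_\omega=\mathscr{M}$ and $\mathcal{M}^{p,q}_\omega=\mathcal{M}^{p,q}$. Condition (\ref{HypothesePointwise}) becomes $1-\frac{N}{q}<0$, that is $q<N$. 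With $q=\frac{Np}{N-p}$ this reads $Np<N(N-p)$, i.e. $p<\frac N2$, which is exactly the stated range; it also guarantees $p<N$, so Remark \ref{PoinKSansPoids} applies. The exponent is $\theta=\frac{q}{\rho}(\nu+1-N)=\frac qN$, and $0<\theta<1$ because $q<N$.

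\emph{Rerunning the proof.} I will follow the proof of Theorem \ref{Theo1} line by line.
\begin{itemize}
\item Truncate at level $t$ and decompose dyadically into the annuli $\{2^{k-1}<d(x,y)\leq 2^k\}$.
\item Use the cancellation (\ref{IntegraleNulle}) with $\varpi\equiv1$ to subtract on each annulus the constant $c_k=f_{B(x,2^k)}$.
\item Bound the kernel with (\ref{TailleKernel}) and apply Hölder's inequality. This gives (\ref{AvantPoinK}) with $\nu=N$, so the factor $2^{k(\nu-N)}$ equals $1$.
\item At the Poincaré step, invoke (\ref{PoincareSobolev_inequality1}) with $q=\frac{Np}{N-p}$. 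This gives (\ref{ApresPoinK}) with summand $2^{k}\left(\frac{1}{|B(x,2^k)|}\int_{B(x,2^k)}|\nabla f|^p\right)^{1/p}$.
\end{itemize}

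\emph{The split.} Split the sum at $k\approx\log_2\mathcal{K}$.
\begin{itemize}
\item For small $k$, bound each average by $\mathscr{M}(|\nabla f|^p)^{1/p}(x)$. The geometric series then gives $C\mathcal{K}\,\mathscr{M}(|\nabla f|^p)^{1/p}(x)$.
\item For large $k$, write $|B|^{-1/p}=|B|^{-1/q}\,|B|^{-(1-p/q)/p}$ and use $|B(x,2^k)|^{-1/q}=c\,2^{-kN/q}$. Bounding the remaining factor by the Morrey norm gives $C\mathcal{K}^{1-N/q}\|\nabla f\|_{\mathcal{M}^{p,q}}$. This series converges precisely because $q<N$.
\item Choose $\mathcal{K}=\left(\|\nabla f\|_{\mathcal{M}^{p,q}}/\mathscr{M}(|\nabla f|^p)^{1/p}(x)\right)^{q/N}$ to balance the two terms. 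This yields (\ref{PointwiseUnweighted}) uniformly in $t$.
\end{itemize}

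\emph{Main obstacle.} The only delicate point is the Poincaré step. The abstract balance condition (\ref{HypotheseBalance}) with $\varpi=\omega=1$ reduces to $s^{N/p}\le C s^{-N/p}$ for $s=r(B)/r(B_0)\le c$, which holds with a constant depending on $c$. It is cleaner, however, to bypass Lemma \ref{Lem_Poincare2poids} and use Remark \ref{PoinKSansPoids} directly, since that is where the Sobolev relation $q=\frac{Np}{N-p}$ enters.
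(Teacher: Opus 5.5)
Your proposal is correct and follows essentially the same route as the paper. Like the paper, you specialize to $\nu=\rho=N$, so that (\ref{HypothesePointwise}) becomes $q<N$, i.e.\ $p<\frac N2$, and you rerun the proof of Theorem \ref{Theo1} with the unweighted Poincar\'e--Sobolev inequality (\ref{PoincareSobolev_inequality1}) in place of Lemma \ref{Lem_Poincare2poids}; your choice $c_k=f_{B(x,2^k)}$, the two tail bounds $C\mathcal{K}$ and $C\mathcal{K}^{1-N/q}$, and the balancing choice of $\mathcal{K}$ match the paper's argument.
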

%%%%%%%%%%%%%%%%%%%%%%%%%%%%%%%%%%%%%%%%%%%%%%%%%%%
\noindent {\bf Proof.} First note that if $\nu=\rho=N$, then the hypothesis (\ref{HypothesePointwise}) is $q<N$ and since $q=\frac{Np}{N-p}$, we easily obtain the constraint $1<p<\frac{N}{2}$. 
Next remark that instead of using the double weighted Poincaré inequality (\ref{PoincareSobolev_inequality}) to pass from (\ref{AvantPoinK}) to (\ref{ApresPoinK}), we can apply  the usual unweighted Poincaré inequality (\ref{PoincareSobolev_inequality1}) and then, following the same arguments as above we obtain the wished estimate (\ref{PointwiseUnweighted}). \hfill $\blacksquare$\\

To the best of our knowledge, the pointwise estimate (\ref{PointwiseUnweighted}) is new in the setting of stratified Lie groups, even in this unweighted framework. 
%%%%%%%%%%%%%%%%%%%%%%%%%%%%%%%%%%%%%%%%%%%%%%%%%%%
%%%%%%%%%%%%%%%%%%%%%%%%%%%%%%%%%%%%%%%%%%%%%%%%%%%
\mysection{Some functional inequalities}\label{Secc_FuncIneq}
We will now deduce two functional inequalities from the pointwise estimate (\ref{PointWiseIneqFeli1}). Throughout this entire section, we will assume the general hypotheses for the kernel $\Omega$, the weight $\varpi$, and the weight $\omega$ considered in Theorem \ref{Theo1}.\\

Our first result is the following Sobolev-type inequality:
%%%%%%%%%%%%%%%%%%%%%%%%%%%%%%%%%%%%%%%%%%%%%%%%%%%
\begin{Corollaire}[\bf Lebesgue-based Sobolev-type inequality]\label{CoroSobolevLebesgue}
	Over a stratified Lie group $\mathbb{G}$ of homogeneous dimension $N\geq 4$, consider a weighted singular integral operator $T_{\Omega, \varpi}$ as defined in expression (\ref{Def_Operator}) above, and consider a function $f:\mathbb{G}\longrightarrow \mathbb{R}$ such that $\nabla f$ belongs to a weighted Lebesgue space $L^q_\omega(\mathbb{G})$ and to a weighted Morrey space $\mathcal{M}_\omega^{p,q}(\mathbb{G})$ for some weight $\omega$ and for some parameters $1<p<q<+\infty$. Assume that the kernel $\Omega$ and the weights $\varpi$ and $\omega$ satisfy the general hypotheses of Theorem \ref{Theo1}.
	
	If we set the power index $0<\theta=\frac{q}{\rho}(\nu+1-N)<1$ and define a parameter $1<r<+\infty$ by the condition $(1-\frac{q}{\rho}(\nu+1-N))r=q$, then we have the inequality
	\begin{equation}\label{FuncIneqLebesgue}
	\|T_{\Omega, \varpi}(f)\|_{L^r_\omega}\leq C\|\nabla f\|_{L^q_\omega}^{1-\theta}\|\nabla f\|_{\mathcal{M}_\omega^{p,q}}^\theta.
	\end{equation}
\end{Corollaire}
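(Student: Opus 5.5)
The plan is to integrate the pointwise estimate (\ref{PointWiseIneqFeli1}) of Theorem \ref{Theo1} in $L^r_\omega$. Since the Morrey norm $\|\nabla f\|_{\mathcal{M}_\omega^{p,q}}$ is a constant with respect to $x$, it factors out of the integral. Taking the $r$-th power and integrating against $\omega(x)dx$ gives
$$\|T_{\Omega,\varpi}(f)\|_{L^r_\omega}\leq C\,\|\nabla f\|_{\mathcal{M}_\omega^{p,q}}^{\theta}\left(\int_{\mathbb{G}}\mathscr{M}_\omega(|\nabla f|^p)^{\frac{(1-\theta)r}{p}}(x)\,\omega(x)dx\right)^{\frac{1}{r}}.$$
One point needs care here: the left-hand side of (\ref{PointWiseIneqFeli1}) is $T_{\Omega,\varpi}(f)$ rather than its absolute value. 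The proof of Theorem \ref{Theo1}, however, actually bounds $T^*_{\Omega,\varpi}(f)\geq |T_{\Omega,\varpi}(f)|$, so I will use the estimate for $|T_{\Omega,\varpi}(f)(x)|$.

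Next I use the choice of $r$. The relation $(1-\theta)r=q$ means the exponent inside the integral is $q/p$. The remaining integral is therefore
$$\int_{\mathbb{G}}\mathscr{M}_\omega(|\nabla f|^p)^{q/p}\,\omega\,dx=\|\mathscr{M}_\omega(|\nabla f|^p)\|_{L^{q/p}_\omega}^{q/p}.$$
Because $q>p$, the exponent $q/p$ is strictly bigger than $1$. The boundedness (\ref{BoundednessMaxWeightedFuncLebesgue}) of $\mathscr{M}_\omega$ on $L^{q/p}_\omega$ then applies, using that $\omega\in A_p$ is a Muckenhoupt weight. This gives
$$\|\mathscr{M}_\omega(|\nabla f|^p)\|_{L^{q/p}_\omega}\leq C\||\nabla f|^p\|_{L^{q/p}_\omega}=C\|\nabla f\|_{L^q_\omega}^p,$$
where the last equality is the power property (\ref{ProprietePuissanceLebesgue}).

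Raising this to the power $\frac{q}{p}\cdot\frac1r$ yields the factor $\|\nabla f\|_{L^q_\omega}^{q/r}=\|\nabla f\|_{L^q_\omega}^{1-\theta}$. Combined with the Morrey factor, this gives (\ref{FuncIneqLebesgue}). Finally, since $0<\theta<1$, we get $r=q/(1-\theta)>q>1$, which confirms that $r$ is admissible.

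There is no real obstacle. The only delicate point is applying the maximal bound in $L^{q/p}_\omega$, which requires $q/p>1$ (guaranteed by $p<q$) and the $A_\sigma$ property of $\omega$ as stated in (\ref{BoundednessMaxWeightedFuncLebesgue}).
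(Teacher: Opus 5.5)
Your proposal is correct and follows the paper's proof essentially step by step: take the $L^r_\omega$ norm of the pointwise bound (\ref{PointWiseIneqFeli1}), use $(1-\theta)r=q$ to reduce to $\|\mathscr{M}_\omega(|\nabla f|^p)\|_{L^{q/p}_\omega}$ with $q/p>1$, apply (\ref{BoundednessMaxWeightedFuncLebesgue}), and finish with the power property (\ref{ProprietePuissanceLebesgue}). Your remark that one should really use the bound for $|T_{\Omega,\varpi}(f)|$, which the proof of Theorem \ref{Theo1} supplies through $T^*_{\Omega,\varpi}$, is a point the paper passes over silently, and it is the right justification for taking the norm.
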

%%%%%%%%%%%%%%%%%%%%%%%%%%%%%%%%%%%%%%%%%%%%%%%%%%%
\noindent {\bf Proof. } We start by the pointwise estimate 
$$T_{\Omega, \varpi}(f)(x)\leq C\left( \mathscr{M}_{\omega}(|\nabla f|^p)^{\frac{1}{p}}(x) \right)^{1-\frac{q}{\rho}(\nu+1-N)} \|\nabla f \|_{\mathcal{M}_{\omega}^{p,q}}^{\frac{q}{\rho}(\nu+1-N)}.$$
Then, we apply the weighted $L^r_\omega(\mathbb{G})$ norm to both sides of this inequality to obtain
$$\|T_{\Omega, \varpi}(f)\|_{L^r_\omega}\leq C\left\|\left( \mathscr{M}_{\omega}(|\nabla f|^p)^{\frac{1}{p}}\right)^{1-\frac{q}{\rho}(\nu+1-N)}\right\|_{L^r_\omega} \|\nabla f \|_{\mathcal{M}_{\omega}^{p,q}}^{\frac{q}{\rho}(\nu+1-N)},$$
and using the property (\ref{ProprietePuissanceLebesgue}), we can write
$$\|T_{\Omega, \varpi}(f)\|_{L^r_\omega}\leq C\left\|\mathscr{M}_{\omega}(|\nabla f|^p)\right\|_{L^{(\frac{1}{p}-\frac{q}{p\rho}(\nu+1-N))r}_\omega}^{\frac{1}{p}-\frac{q}{p\rho}(\nu+1-N)} \|\nabla f \|_{\mathcal{M}_{\omega}^{p,q}}^{\frac{q}{\rho}(\nu+1-N)}.$$
Since by hypothesis we have $1<p<q$ and the relationship $(1-\frac{q}{\rho}(\nu+1-N))r=q$, we have that $(\frac{1}{p}-\frac{q}{p\rho}(\nu+1-N))r=\frac{q}{p}>1$, and we can deduce that the maximal function $\mathscr{M}_\omega$ is bounded in the weighted Lebesgue space $L^{(\frac{1}{p}-\frac{q}{p\rho}(\nu+1-N))r}_\omega(\mathbb{G})$ (recall the control (\ref{BoundednessMaxWeightedFuncLebesgue}) above). From this fact we have the control
$$\|T_{\Omega, \varpi}(f)\|_{L^r_\omega}\leq C\left\||\nabla f|^p\right\|_{L^{(\frac{1}{p}-\frac{q}{p\rho}(\nu+1-N))r}_\omega}^{\frac{1}{p}-\frac{q}{p\rho}(\nu+1-N)} \|\nabla f \|_{\mathcal{M}_{\omega}^{p,q}}^{\frac{q}{\rho}(\nu+1-N)}.$$
Using again the property (\ref{ProprietePuissanceLebesgue}), we write 
$$\|T_{\Omega, \varpi}(f)\|_{L^r_\omega}\leq C\|\nabla f\|_{L^{(1-\frac{q}{\rho}(\nu+1-N))r}_\omega}^{1-\frac{q}{\rho}(\nu+1-N)} \|\nabla f \|_{\mathcal{M}_{\omega}^{p,q}}^{\frac{q}{\rho}(\nu+1-N)}.$$
Now we remark that, by hypothesis, we have $(1-\frac{q}{\rho}(\nu+1-N))r=q$, and we obtain the inequality
$$\|T_{\Omega, \varpi}(f)\|_{L^r_\omega}\leq C\|\nabla f\|_{L^{q}_\omega}^{1-\frac{q}{\rho}(\nu+1-N)} \|\nabla f \|_{\mathcal{M}_{\omega}^{p,q}}^{\frac{q}{\rho}(\nu+1-N)},$$
which we can rewrite as follows:
$$\|T_{\Omega, \varpi}(f)\|_{L^r_\omega}\leq C\|\nabla f\|_{L^{q}_\omega}^{1-\theta} \|\nabla f \|_{\mathcal{M}_{\omega}^{p,q}}^{\theta},$$
and this ends the proof of Corollary \ref{CoroSobolevLebesgue}. \hfill $\blacksquare$\\

Let us remark that this functional inequality leads quite easily to a new class of Sobolev estimates. Indeed, since we have the control $\|\nabla f \|_{\mathcal{M}_{\omega}^{p,q}}\leq \|\nabla f \|_{\mathcal{M}_{\omega}^{q,q}}$ and the identification $\|\nabla f \|_{\mathcal{M}_{\omega}^{q,q}}=\|\nabla f \|_{L^q_{\omega}}$, we obtain
\begin{eqnarray}
\|T_{\Omega, \varpi}(f)\|_{L^r_\omega}&\leq &C\|\nabla f\|_{L^{q}_\omega}^{1-\theta} \|\nabla f \|_{\mathcal{M}_{\omega}^{p,q}}^{\theta}\notag\\
&\leq &C\|\nabla f\|_{L^{q}_\omega}^{1-\theta} \|\nabla f \|_{\mathcal{M}_{\omega}^{q,q}}^{\theta}\simeq C\|\nabla f\|_{L^{q}_\omega}^{1-\theta} \|\nabla f \|_{L_{\omega}^{q}}^{\theta}\notag\\
&\leq & C\|\nabla f\|_{L^{q}_\omega},\label{SobolevLebesgueIneq}
\end{eqnarray}
and this estimate seems to be new in the context of weighted rough singular integral operators on stratified Lie groups. Note also that, in the unweighted setting, we have the upper and lower Ahlfors powers $\nu=\rho=N$, and thus the relationship $(1-\frac{q}{\rho}(\nu+1-N))r=q$ becomes $\frac{1}{r}=\frac{1}{q}-\frac{1}{N}$ with $1<q<N$, which are the classical relationships for the usual Sobolev inequalities on stratified Lie groups.\\

From the pointwise inequality (\ref{PointWiseIneqFeli1}), it is quite straightforward to deduce a wide class of functional inequalities. In the previous corollary, we used the weighted Lebesgue space $L^r_\omega$ as the base space. Now, we will consider a weighted Morrey space as the base space.

%%%%%%%%%%%%%%%%%%%%%%%%%%%%%%%%%%%%%%%%%%%%%%%%%%%
\begin{Corollaire}[\bf Morrey-based Sobolev-type inequality]\label{CoroSobolevMorrey}
	Over a stratified Lie group $\mathbb{G}$ of homogeneous dimension $N\geq 4$, consider a weighted singular integral operator $T_{\Omega, \varpi}$ as defined in the expression (\ref{Def_Operator}) above, and consider a function $f:\mathbb{G}\longrightarrow \mathbb{R}$ such that $\nabla f$ belongs to a weighted Morrey space $\mathcal{M}^{\mathfrak{p}, \mathfrak{q}}_\omega(\mathbb{G})$ and to a weighted Morrey space $\mathcal{M}_\omega^{p,q}(\mathbb{G})$ for some weight $\omega$ and for some parameters $1<\mathfrak{p}<\mathfrak{q}<+\infty$ and $1<p<q<+\infty$. Assume that the kernel $\Omega$ and the weights $\varpi$ and $\omega$ satisfy the general hypotheses of Theorem \ref{Theo1}. \\
	
	If we set the power index $0<\theta=\frac{q}{\rho}(\nu+1-N)<1$ and define two parameters $1<\mathfrak{m}<r<+\infty$ by the conditions $(1-\frac{q}{\rho}(\nu+1-N))\mathfrak{m}=\mathfrak{p}$ and $(1-\frac{q}{\rho}(\nu+1-N))r=\mathfrak{q}$, then we have the inequality
	\begin{equation}\label{FuncIneqMorrey}
	\|T_{\Omega, \varpi}(f)\|_{\mathcal{M}^{\mathfrak{m}, r}_\omega}\leq C\|\nabla f\|_{\mathcal{M}^{\mathfrak{p}, \mathfrak{q}}_\omega}^{1-\theta}\|\nabla f\|_{\mathcal{M}_\omega^{p,q}}^\theta.
	\end{equation}
\end{Corollaire}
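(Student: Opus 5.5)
The plan mirrors the proof of Corollary \ref{CoroSobolevLebesgue}, with weighted Lebesgue norms replaced by weighted Morrey norms. We start from the pointwise estimate (\ref{PointWiseIneqFeli1}) of Theorem \ref{Theo1}:
$$T_{\Omega, \varpi}(f)(x)\leq C\left(\mathscr{M}_{\omega}(|\nabla f|^p)^{\frac{1}{p}}(x)\right)^{1-\theta}\|\nabla f\|_{\mathcal{M}_\omega^{p,q}}^{\theta}.$$
The factor $\|\nabla f\|_{\mathcal{M}_\omega^{p,q}}^\theta$ is a constant. Taking the $\mathcal{M}^{\mathfrak{m},r}_\omega$ norm of both sides (the Morrey norm is monotone with respect to pointwise domination of absolute values), we reduce the problem to bounding
$$\left\|\left(\mathscr{M}_{\omega}(|\nabla f|^p)\right)^{\frac{1-\theta}{p}}\right\|_{\mathcal{M}^{\mathfrak{m},r}_\omega}.$$

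Next we apply the power property (\ref{ProprietePuissanceMorrey}) with exponent $\alpha=\frac{1-\theta}{p}$. This rewrites the quantity as
$$\left\|\mathscr{M}_{\omega}(|\nabla f|^p)\right\|^{\frac{1-\theta}{p}}_{\mathcal{M}^{\frac{(1-\theta)\mathfrak{m}}{p},\frac{(1-\theta)r}{p}}_\omega}=\left\|\mathscr{M}_{\omega}(|\nabla f|^p)\right\|^{\frac{1-\theta}{p}}_{\mathcal{M}^{\frac{\mathfrak{p}}{p},\frac{\mathfrak{q}}{p}}_\omega},$$
where we used the defining relations $(1-\theta)\mathfrak{m}=\mathfrak{p}$ and $(1-\theta)r=\mathfrak{q}$. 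Strictly, (\ref{ProprietePuissanceMorrey}) is stated for $\alpha>1$, but its proof is just the identity $|f|^{\alpha p}=(|f|^\alpha)^p$ inside the defining integral, so it holds for any $\alpha>0$. We then invoke the boundedness (\ref{BoundednessMaxWeightedFuncMorrey}) of $\mathscr{M}_\omega$ on $\mathcal{M}^{\mathfrak{p}/p,\mathfrak{q}/p}_\omega$. Applying (\ref{ProprietePuissanceMorrey}) once more then gives $\||\nabla f|^p\|_{\mathcal{M}^{\mathfrak{p}/p,\mathfrak{q}/p}_\omega}^{\frac{1-\theta}{p}}=\|\nabla f\|_{\mathcal{M}^{\mathfrak{p},\mathfrak{q}}_\omega}^{1-\theta}$. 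Combining this with the constant factor yields (\ref{FuncIneqMorrey}).

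The main obstacle is the step applying the maximal function bound. Estimate (\ref{BoundednessMaxWeightedFuncMorrey}) requires the lower Morrey index to be strictly greater than $1$, that is $\mathfrak{p}/p>1$, together with $\mathfrak{p}/p\le \mathfrak{q}/p$. The second condition is automatic since $\mathfrak{p}<\mathfrak{q}$. The first is not implied by $1<\mathfrak{p}$ alone. In the Lebesgue case it came for free because the same $q$ appeared in both norms. Here I would make the implicit assumption $p<\mathfrak{p}$ explicit (this is the natural reading, e.g.\ taking $\mathfrak{p}=q$ or requiring the Morrey space $\mathcal{M}^{\mathfrak{p},\mathfrak{q}}_\omega$ to be ``above'' $p$), or note that it is needed for the argument. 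One must also check that $\omega\in A_p$ gives the $A_\sigma$ hypothesis required by (\ref{BoundednessMaxWeightedFuncMorrey}); this holds with $\sigma=p$. Finally, $\mathfrak{m}<r$ and $\mathfrak{m},r>1$ follow from $\mathfrak{p}<\mathfrak{q}$ and $0<1-\theta<1$, so the left-hand Morrey space is well defined.
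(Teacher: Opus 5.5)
Your argument is the paper's: the pointwise bound, the power identity (\ref{ProprietePuissanceMorrey}) with exponent $\frac{1-\theta}{p}$, the boundedness of $\mathscr{M}_\omega$ on $\mathcal{M}^{\mathfrak{p}/p,\mathfrak{q}/p}_\omega$, and the power identity again. Your caveat is also justified: the paper derives $\frac{(1-\theta)\mathfrak{m}}{p}>1$ from $1<\mathfrak{m}$, but this quantity equals $\mathfrak{p}/p$, so this route genuinely needs the extra assumption $p<\mathfrak{p}$ (which, for instance, excludes the paper's later specialization $\mathfrak{p}=p$).
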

%%%%%%%%%%%%%%%%%%%%%%%%%%%%%%%%%%%%%%%%%%%%%%%%%%%
\noindent {\bf Proof. } As before, the starting point is the pointwise inequality (\ref{PointWiseIneqFeli1}):
$$T_{\Omega, \varpi}(f)(x)\leq C\left( \mathscr{M}_{\omega}(|\nabla f|^p)^{\frac{1}{p}}(x) \right)^{1-\frac{q}{\rho}(\nu+1-N)} \|\nabla f \|_{\mathcal{M}_{\omega}^{p,q}}^{\frac{q}{\rho}(\nu+1-N)}.$$
Then, we apply the weighted $\mathcal{M}^{\mathfrak{m},r}_\omega(\mathbb{G})$ norm to both sides of this inequality to obtain
$$\|T_{\Omega, \varpi}(f)\|_{\mathcal{M}^{\mathfrak{m},r}_\omega}\leq C\left\|\left( \mathscr{M}_{\omega}(|\nabla f|^p)^{\frac{1}{p}}\right)^{1-\frac{q}{\rho}(\nu+1-N)}\right\|_{\mathcal{M}^{\mathfrak{m},r}_\omega} \|\nabla f \|_{\mathcal{M}_{\omega}^{p,q}}^{\frac{q}{\rho}(\nu+1-N)}.$$
We use now the property (\ref{ProprietePuissanceMorrey}) to write 
$$\|T_{\Omega, \varpi}(f)\|_{\mathcal{M}^{\mathfrak{m},r}_\omega}\leq C\left\|\mathscr{M}_{\omega}(|\nabla f|^p)\right\|_{\mathcal{M}^{(\frac{1}{p}-\frac{q}{p\rho}(\nu+1-N))\mathfrak{m},(\frac{1}{p}-\frac{q}{p\rho}(\nu+1-N))r}_\omega}^{\frac{1}{p}-\frac{q}{p\rho}(\nu+1-N)} \|\nabla f \|_{\mathcal{M}_{\omega}^{p,q}}^{\frac{q}{\rho}(\nu+1-N)},$$
but since we have $(\frac{1}{p}-\frac{q}{p\rho}(\nu+1-N))\mathfrak{m}>1$ as well as $(\frac{1}{p}-\frac{q}{p\rho}(\nu+1-N))r>1$ (recall that $1<\mathfrak{m}<r<+\infty$), by the boundedness of the weighted maximal function $\mathscr{M}_\omega$ on the weighted Morrey spaces $\mathcal{M}^{(\frac{1}{p}-\frac{q}{p\rho}(\nu+1-N))\mathfrak{m},(\frac{1}{p}-\frac{q}{p\rho}(\nu+1-N))r}_\omega(\mathbb{G})$ (recall (\ref{BoundednessMaxWeightedFuncMorrey}) above) we obtain
$$\|T_{\Omega, \varpi}(f)\|_{\mathcal{M}^{\mathfrak{m},r}_\omega}\leq C\left\||\nabla f|^p\right\|_{\mathcal{M}^{(\frac{1}{p}-\frac{q}{p\rho}(\nu+1-N))\mathfrak{m},(\frac{1}{p}-\frac{q}{p\rho}(\nu+1-N))r}_\omega}^{\frac{1}{p}-\frac{q}{p\rho}(\nu+1-N)} \|\nabla f \|_{\mathcal{M}_{\omega}^{p,q}}^{\frac{q}{\rho}(\nu+1-N)},$$
at this point we apply again the property (\ref{ProprietePuissanceMorrey}) and we have
$$\|T_{\Omega, \varpi}(f)\|_{\mathcal{M}^{\mathfrak{m},r}_\omega}\leq C\left\|\nabla f\right\|_{\mathcal{M}^{(1-\frac{q}{\rho}(\nu+1-N))\mathfrak{m},(1-\frac{q}{\rho}(\nu+1-N))r}_\omega}^{1-\frac{q}{\rho}(\nu+1-N)} \|\nabla f \|_{\mathcal{M}_{\omega}^{p,q}}^{\frac{q}{\rho}(\nu+1-N)}.$$
Since by hypothesis we have $(1-\frac{q}{\rho}(\nu+1-N))\mathfrak{m}=\mathfrak{p}$, $(1-\frac{q}{\rho}(\nu+1-N))r=\mathfrak{q}$ and $\theta=\frac{q}{\rho}(\nu+1-N)$, we obtain
$$\|T_{\Omega, \varpi}(f)\|_{\mathcal{M}^{\mathfrak{m},r}_\omega}\leq C\left\|\nabla f\right\|_{\mathcal{M}^{\mathfrak{p},\mathfrak{q}}_\omega}^{1-\theta} \|\nabla f \|_{\mathcal{M}_{\omega}^{p,q}}^{\theta},$$
which is the wished inequality.  \hfill $\blacksquare$\\

This previous inequality has several interesting avatars. Indeed, if we set $\mathfrak{p}=p$ and $\mathfrak{q}=q$, then we obtain
$$\|T_{\Omega, \varpi}(f)\|_{\mathcal{M}^{\mathfrak{m},r}_\omega}\leq C\left\|\nabla f\right\|_{\mathcal{M}^{\mathfrak{p},\mathfrak{q}}_\omega}^{1-\theta} \|\nabla f \|_{\mathcal{M}_{\omega}^{p,q}}^{\theta}= C\|\nabla f \|_{\mathcal{M}_{\omega}^{p,q}},$$
and this is, to the best of our knowledge, a new functional estimate (even in the Euclidean setting) when considering weighted rough singular integral operators and weighted Morrey spaces. Note also that if we set $\mathfrak{m}=r$, then we have $\mathfrak{p}=\mathfrak{q}$ and obtain
$$\|T_{\Omega, \varpi}(f)\|_{\mathcal{M}^{r,r}_\omega}\leq C\left\|\nabla f\right\|_{\mathcal{M}^{\mathfrak{q},\mathfrak{q}}_\omega}^{1-\theta} \|\nabla f \|_{\mathcal{M}_{\omega}^{p,q}}^{\theta},$$
which can be rewritten as
$$\|T_{\Omega, \varpi}(f)\|_{L^r_\omega}\leq C\left\|\nabla f\right\|_{L^{\mathfrak{q}}_\omega}^{1-\theta} \|\nabla f \|_{\mathcal{M}_{\omega}^{p,q}}^{\theta},$$
due to the identification of spaces $\mathcal{M}^{\sigma, \sigma}_\omega(\mathbb{G})=L^{\sigma}_\omega(\mathbb{G})$. This is a generalization of the inequality (\ref{FuncIneqLebesgue}), which can be obtained by setting $\mathfrak{q}=q$.\\ 

To end this section, we present two functional inequalities in the unweighted setting.
%%%%%%%%%%%%%%%%%%%%%%%%%%%%%%%%%%%%%%%%%%%%%%%%%%%
\begin{Corollaire}[\bf Unweighted Functional inequalities]\label{CoroSobolevLebesgueNoweight}
	Let $\mathbb{G}$ be a stratified Lie group of homogeneous dimension $N\geq 4$, and let $T_{\Omega}$ be a weighted singular integral operator as defined in the expression (\ref{Def_Operator}), with a kernel $\Omega$ that satisfies (\ref{TailleKernel}) and (\ref{IntegraleNulle}) with $\varpi\equiv 1$.\\
	
	Let $1<p,q,r<+\infty$ be indices such that $1<p<\frac{N}{2}$, $q=\frac{Np}{N-p}$, and $r=\frac{Nq}{N-q}$. If we consider a function $f:\mathbb{G}\longrightarrow \mathbb{R}$ such that $\nabla f\in L^q(\mathbb{G})$ and $\nabla f\in \mathcal{M}^{p,q}(\mathbb{G})$, then we have the inequality
	\begin{equation}\label{FuncIneqLebesgueNoweight}
	\|T_{\Omega}(f)\|_{L^r}\leq C\|\nabla f\|_{L^q}^{1-\frac{q}{N}}\|\nabla f\|_{\mathcal{M}^{p,q}}^{\frac{q}{N}},
	\end{equation}
	from which we easily deduce the following Sobolev-type estimate:
	\begin{equation}\label{FuncIneqLebesgueNoweight1}
	\|T_{\Omega}(f)\|_{L^r}\leq C\|\nabla f\|_{L^q},
	\end{equation}
	where $\frac{N}{N-1}<q<N$ and $r=\frac{Nq}{N-q}$.
\end{Corollaire}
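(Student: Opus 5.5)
This corollary follows the route of Corollary \ref{CoroSobolevLebesgue}, with Corollary \ref{CorollaireEstimationPonctuelle} replacing Theorem \ref{Theo1}. In the unweighted case $\nu=\rho=N$, so $\theta=\frac{q}{N}$. The hypothesis $1<p<\frac{N}{2}$ together with $q=\frac{Np}{N-p}$ gives $q<N$, and hence $0<\theta<1$. We start from the pointwise estimate (\ref{PointwiseUnweighted}),
$$T_{\Omega}(f)(x)\leq C\left(\mathscr{M}(|\nabla f|^p)^{\frac{1}{p}}(x)\right)^{1-\frac{q}{N}}\|\nabla f\|_{\mathcal{M}^{p,q}}^{\frac{q}{N}},$$
and take the $L^r$ norm of both sides. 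Since we need to control $|T_\Omega(f)|$ and not only $T_\Omega(f)$, we note that the proof of Theorem \ref{Theo1} actually bounds $T^*_{\Omega}(f)\geq |T_{\Omega}(f)|$, so the same estimate holds for $|T_\Omega(f)|$.

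Next we check the exponents. With $r=\frac{Nq}{N-q}$ we get $(1-\frac{q}{N})r=q$, which is exactly the relation $(1-\frac{q}{\rho}(\nu+1-N))r=q$ of Corollary \ref{CoroSobolevLebesgue} with $\nu=\rho=N$. Using (\ref{ProprietePuissanceLebesgue}) with $\omega\equiv1$, we have
$$\left\|\left(\mathscr{M}(|\nabla f|^p)^{\frac1p}\right)^{1-\frac qN}\right\|_{L^r}=\|\mathscr{M}(|\nabla f|^p)\|_{L^{q/p}}^{\frac{1}{p}(1-\frac{q}{N})}.$$
Because $q/p>1$, the $L^{q/p}$-boundedness of $\mathscr{M}$ (the estimate (\ref{BoundednessMaxWeightedFuncLebesgue}) with $\omega\equiv1$) bounds this by $C\||\nabla f|^p\|_{L^{q/p}}^{\frac1p(1-\frac qN)}=C\|\nabla f\|_{L^q}^{1-\frac qN}$. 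This gives (\ref{FuncIneqLebesgueNoweight}).

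For (\ref{FuncIneqLebesgueNoweight1}), we use the inclusion $\|\nabla f\|_{\mathcal{M}^{p,q}}\leq\|\nabla f\|_{\mathcal{M}^{q,q}}=\|\nabla f\|_{L^q}$. In the unweighted case this follows from Hölder's inequality on each ball, since the normalization $|B|^{-(1-p/q)}$ is exactly what Hölder produces. Substituting into (\ref{FuncIneqLebesgueNoweight}), the two exponents sum to $1$, and we obtain $\|T_\Omega(f)\|_{L^r}\leq C\|\nabla f\|_{L^q}$.

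The main obstacle is the parameter range stated in (\ref{FuncIneqLebesgueNoweight1}). As $p$ ranges over $]1,\frac N2[$, $q=\frac{Np}{N-p}$ is increasing in $p$ and sweeps out exactly $]\frac{N}{N-1},N[$. So every $q$ in that interval arises from an admissible $p$. For each such $q$ we choose the corresponding $p$, after checking that $\nabla f\in L^q$ alone (via the inclusion above) suffices to place $\nabla f$ in $\mathcal{M}^{p,q}$. The condition $q<N$ also ensures that $r=\frac{Nq}{N-q}$ is finite and greater than $1$.
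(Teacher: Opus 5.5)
Your proposal is correct and follows the same route as the paper: you take the $L^r$ norm of the unweighted pointwise estimate (\ref{PointwiseUnweighted}), use $(1-\frac{q}{N})r=q$ together with the $L^{q/p}$-boundedness of $\mathscr{M}$, and then use the inclusion $L^q=\mathcal{M}^{q,q}\subset\mathcal{M}^{p,q}$ to get (\ref{FuncIneqLebesgueNoweight1}). You also supply details the paper leaves implicit: the exponent bookkeeping, the H\"older argument for the inclusion, the remark that the bound controls $|T_\Omega(f)|$ through $T^*_\Omega$, and the correspondence between $p\in\,]1,\frac{N}{2}[$ and $q\in\,]\frac{N}{N-1},N[$.
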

%%%%%%%%%%%%%%%%%%%%%%%%%%%%%%%%%%%%%%%%%%%%%%%%%%%
Using the pointwise estimate (\ref{PointwiseUnweighted}) and the fact that the maximal function $\mathscr{M}$ is bounded on the Lebesgue spaces $L^\rho(\mathbb{G})$ for $\rho>1$, it is straightforward to deduce the inequality (\ref{FuncIneqLebesgueNoweight}). Since we have the space inclusion $L^q(\mathbb{G})= \mathcal{M}^{q,q}(\mathbb{G})\subset\mathcal{M}^{p,q}(\mathbb{G})$, we can easily deduce the estimate (\ref{FuncIneqLebesgueNoweight1}) from (\ref{FuncIneqLebesgueNoweight}). \\

As pointed out previously, many other (unweighted) functional inequalities can be obtained from the pointwise control (\ref{PointwiseUnweighted}), and we do not claim to be exhaustive here. 
%%%%%%%%%%%%%%%%%%%%%%%%%%%%%%%%%%%%%%%%%%%%%%%%%%%
%%%%%%%%%%%%%%%%%%%%%%%%%%%%%%%%%%%%%%%%%%%%%%%%%%%
\mysection{The stationary Navier-Stokes equations on the Heisenberg group}\label{Secc_Appl}

In this section, we give a first example of an interesting application of the functional estimates obtained in the previous sections in the general setting of stratified Lie groups. Specifically, we will explore some properties of weak solutions to the incompressible stationary Navier-Stokes equations (with a rough drift) on the Heisenberg group $\mathbb{H}$, which is one of the simplest non-trivial stratified Lie groups.\\

Before continuing, it is worth mentioning that the tools developed in the previous sections could also be applied to a wide range of partial differential equations with a rough drift to study interesting properties, such as the existence, uniqueness, and regularity of solutions in the setting of stratified Lie groups.\\ 

Let us recall the structure of this group $\mathbb{H}$. For any $x=(x_{1},x_{2},x_{3}), \, y=(y_1,y_2,y_3)\in \mathbb{R}^{3}$,  the law group is given by 
$$x\cdot y=(x_{1},x_{2},x_{3})\cdot(y_{1},y_{2},y_{3})=\left(x_{1}+y_{1},x_{2}+y_{2},x_{3}+y_{3}+\frac{1}{2}(x_{1}y_{2}-y_{1}x_{2})\right),$$
and the associated dilation is 
\begin{eqnarray*}
\delta_{\alpha}: \mathbb{R}^{3}&\longrightarrow &\mathbb{R}^{3}\\
x=(x_{1},x_{2},x_{3}) &\longmapsto & \delta_{\alpha}[x]=(\alpha x_{1}, \alpha x_{2},\alpha^{2} x_{3}).
\end{eqnarray*}
Note that the topological dimension is $n=3$ but the \emph{homogeneous} dimension is $N=4$. We consider now the norm 
$$|x|=\left[\left(x_{1}^{2}+x_{2}^{2}\right)^{2}+16 x_{3}^{2}\right]^{1/4},$$
and we will denote by $\mathbb{H}$ the structure given by $(\mathbb{R}^3,\cdot,  \delta,\, |\cdot|)$. \\

The left invariant vector fields are given by 
$$X_{1}=\frac{\partial}{\partial x_{1}}-\frac{1}{2}x_{2}\frac{\partial}{\partial x_{3}}, \qquad X_{2}=\frac{\partial}{\partial x_{2}}+\frac{1}{2} x_{1} \frac{\partial}{\partial x_{3}}\, ,\qquad\mbox{and} \qquad T=\frac{\partial}{\partial x_{3}},$$
and we note that they satisfy the property $[X_{1},X_{2}]=X_{1}X_{2}-X_{2}X_{1}= T$, so we can consider as an H\"ormander family ${\bf X}$ the two first vector fields $X_1$ and $X_2$. Now, for a regular enough function $\varphi:\mathbb{H}\longrightarrow \mathbb{R}$, its gradient is then defined as the 2D vector:
\begin{equation}\label{Def_Gradient}
\nabla \varphi=(X_1\varphi, X_2\varphi),
\end{equation}
and we define the \emph{divergence} of a vector field $\vec{\varphi}:\mathbb{H}\longrightarrow \mathbb{R}^2$, (\emph{i.e.} we have $\vec{\varphi}=(\varphi_1, \varphi_2)$ where $\varphi_{1,2}:\mathbb{H}\longrightarrow \mathbb{R}$) by the expression
\begin{equation}\label{Def_Divergence}
div(\vec{\varphi})=X_1\varphi_1+X_2\varphi_2.
\end{equation}
For the Laplace operator $\Delta$ we write $\Delta = X^2_1+X^2_2$ and for a vector field $\vec{\varphi}:\mathbb{H}\longrightarrow \mathbb{R}^2$ we have 
$$\Delta \vec{\varphi}=(\Delta \varphi_1, \Delta \varphi_2).$$
Remark that we have the vectorial identity $div(\nabla \varphi)=\Delta \varphi$. For more details on the Heisenberg group see the book \cite{Stein2}.\\

We now have enough material to present the stationary, incompressible, rough Navier-Stokes equations on the Heisenberg group $\mathbb{H}$. Let $\vu=(u_1, u_2):\mathbb{H}\longrightarrow \mathbb{R}^2$ be a vector field, and let $T_{\Omega}$ be a rough singular integral operator as considered in the expression (\ref{Def_Operator}) above (with $\varpi\equiv 1$). If $\pi:\mathbb{H}\longrightarrow \mathbb{R}$ is a scalar function, we consider the following rough version (\emph{i.e.}, with a rough drift) of the incompressible Navier-Stokes equations:
\begin{equation}\label{Eq_NS}
\Delta \vu - (T_{\Omega}(\vu)\cdot \nabla)\vu - \nabla \pi=0, 
\end{equation}
with $T_{\Omega}(\vu)=(T_{\Omega}(u_1), T_{\Omega}(u_2))$ and where the incompressibility condition is given by the divergence-free property of the rough drift $T_{\Omega}(\vu)$:
\begin{equation}\label{DivNS}
div(T_{\Omega}(\vu))=X_1T_{\Omega}(u_1)+X_2T_{\Omega}(u_2)=0.
\end{equation}
We will also impose the following two divergence-free conditions:
\begin{equation}\label{DivNS1}
div(\vu)=X_1(u_1)+X_2(u_2)=0,
\end{equation}
and
\begin{equation}\label{DivNS2}
div(\Delta\vu)=X_1(\Delta u_1)+X_2(\Delta u_2)=0.
\end{equation}
\begin{Remarque}
We remark that these two latter conditions are not equivalent, since the operators $X_j$ and $\Delta$ do not commute, and we have $div(\Delta \vu)\neq \Delta div(\vu)$.
\end{Remarque}	

Note now that the system (\ref{Eq_NS}) can be rewritten in the following expanded form
\begin{equation*}
{\left[\begin{matrix}
 {\Delta u_{1}}\\[3mm]
 {\Delta u_{2}}
\end{matrix}\right]}-
{\left[\begin{matrix}
{T_{\Omega}(u_{1})X_1(u_{1})+T_{\Omega}(u_{2})X_2(u_{1})}\\[3mm]
{T_{\Omega}(u_{1})X_1(u_{2})+T_{\Omega}(u_{2})X_2(u_{2})}
\end{matrix}\right]}
-{ \left[\begin{matrix}
{ X_1(\pi)}\\[3mm]
{X_2(\pi)}
\end{matrix}\right]}=0,\\[2mm]
\end{equation*}
remark moreover that due to the divergence free condition (\ref{DivNS}), we have the following identity $$(T_{\Omega}(\vu)\cdot \nabla)\vu=div(\vu\otimes T_{\Omega}(\vu)),$$
where $\vu\otimes T_{\Omega}(\vu)=\left[\begin{matrix}
u_{1}T_{\Omega}(u_{1}) & u_{1}T_{\Omega}(u_{2}) \\[3mm]
u_{2}T_{\Omega}(u_{1}) & u_{2}T_{\Omega}(u_{2}) \\[3mm]
\end{matrix}\right]
$ is the tensor formed by the vectors $\vu$ and $T_{\Omega}(\vu)$ and where we have 
$div(\vu\otimes T_{\Omega}(\vu))=\left[\begin{matrix}
X_1(u_{1}T_{\Omega}(u_{1})) + X_2(u_{1}T_{\Omega}(u_{2})) \\[3mm]
X_1(u_{2}T_{\Omega}(u_{1})) + X_2( u_{2}T_{\Omega}(u_{2}))\\[3mm]
\end{matrix}\right]$. See the book \cite{Chamorro_Livre} for more details about the Navier-Stokes equations.\\

%%%%%%%%%%%%%%%%%%%%%%%%%%%%%%%%%%%%%%%%%%%%%%%%%%%
We now turn our attention to the problem of uniqueness of weak solutions to the rough, incompressible Navier-Stokes equations (\ref{Eq_NS}). Note that, due to the structure of the equation, it is quite natural to consider a solution $\vu\in \dot{H}^1$ (see \cite[Chapitre 9]{Chamorro_Livre}). However, even in the classical three-dimensional Euclidean setting, uniqueness of weak $\dot{H}^1$-solutions remains a challenging open problem in general. See \cite[Chapter X]{Galdi} or \cite{Seregin} and the references therein for more details.\\

In order to obtain some uniqueness results, additional hypotheses are imposed on the solution $\vu\in \dot{H}^1$. These hypotheses generally require that $\vu\in E$, where $E$ is a suitable functional space (see \emph{e.g.} \cite{chamorro2021some}), and it is then deduced that the trivial solution (\emph{i.e.}, $\vu \equiv 0$) is unique. Note that these types of results are known in the literature as \emph{Liouville}-type theorems for the stationary Navier-Stokes equations.\\

In our next result, we will study the uniqueness of the trivial solution in the setting of the rough, incompressible Navier-Stokes equations (\ref{Eq_NS}) on the Heisenberg group, which has topological dimension $n=3$.

%%%%%%%%%%%%%%%%%%%%%%%%%%%%%%%%%%%%%%%%%%%%%%%%%%%
\begin{Theoreme}\label{TheoUniqueness}
	Over the Heisenberg group $\mathbb{H}$, consider a weighted singular integral operator $T_{\Omega}$ as defined in the expression (\ref{Def_Operator}) above, with $\varpi=1$, and where the kernel $\Omega$ satisfies the assumptions of Theorem \ref{Theo1}. Assume that $\vu\in \dot{H}^1(\mathbb{H})$ is a regular weak solution of the Navier-Stokes equation (\ref{Eq_NS}) such that the divergence-free conditions (\ref{DivNS}), (\ref{DivNS1}), and (\ref{DivNS2}) hold and such that $|\vu(x)|\underset{|x|\to +\infty}{\longrightarrow} 0$. Then $\vu \equiv 0$. 
\end{Theoreme}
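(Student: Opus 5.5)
We will use the classical energy method for Liouville-type results: test the equation (\ref{Eq_NS}) against $\vu$ itself (more precisely, against $\vu\,\varphi_R$ with $\varphi_R$ a cut-off equal to $1$ on $B(0,R)$ and supported in $B(0,2R)$), and show that every term except $\|\nabla\vu\|_{L^2}^2$ vanishes as $R\to+\infty$. This gives $\nabla \vu\equiv 0$. Hence $\vu$ is constant, and the decay $|\vu(x)|\to 0$ at infinity forces $\vu\equiv 0$. Since $\vu$ is assumed regular, all integrations by parts are justified at the level of $\varphi_R$.

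The first step is to control the drift using the functional inequalities of Section \ref{Secc_FuncIneq} with $N=4$. Since $\vu\in\dot H^1(\mathbb{H})$, i.e. $\nabla \vu\in L^2$, the unweighted Sobolev-type estimate (\ref{FuncIneqLebesgueNoweight1}) with $q=2$ (admissible since $\frac{4}{3}<2<4$) gives $T_\Omega(\vu)\in L^4(\mathbb{H})$ with $r=\frac{Nq}{N-q}=4$, and $\|T_\Omega(\vu)\|_{L^4}\leq C\|\nabla\vu\|_{L^2}$. Likewise, the Sobolev inequality (\ref{SobolevIneq}) gives $\vu\in L^4$, using the decay at infinity to remove the constant. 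In this way the nonlinear term $\int (T_\Omega(\vu)\cdot\nabla)\vu\cdot \vu\,\varphi_R$ is absolutely integrable by Hölder's inequality ($\frac14+\frac12+\frac14=1$).

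Next we treat each term. For the viscous term, integration by parts gives $-\int|\nabla \vu|^2\varphi_R-\int \nabla\vu\cdot(\vu\otimes\nabla\varphi_R)$. The error term is bounded by $\|\nabla \vu\|_{L^2}\|\vu\|_{L^4(B_{2R}\setminus B_R)}\|\nabla\varphi_R\|_{L^4}$, and $\|\nabla\varphi_R\|_{L^4}\simeq R^{-1}R^{N/4}=C$ is bounded, so this term tends to $0$ as the $L^4$ tail of $\vu$ vanishes. For the transport term, we use $(T_\Omega(\vu)\cdot\nabla)\vu\cdot\vu=\frac12 T_\Omega(\vu)\cdot\nabla|\vu|^2$ and the divergence-free condition (\ref{DivNS}) to reduce it to $-\frac12\int |\vu|^2\, T_\Omega(\vu)\cdot\nabla\varphi_R$. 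This is bounded by $\|\vu\|_{L^4(\text{annulus})}^2\|T_\Omega\vu\|_{L^4}\|\nabla\varphi_R\|_{L^4}\to0$.

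For the pressure term, we integrate by parts and use (\ref{DivNS1}) to obtain $\int\pi\,\vu\cdot\nabla\varphi_R$, so we need an integrability estimate on $\pi$. Taking the divergence of (\ref{Eq_NS}) and using (\ref{DivNS2}), we get $\Delta\pi=-div(div(\vu\otimes T_\Omega(\vu)))$. Thus $\pi=-\sum_{i,j}\big((-\Delta)^{-1/2}X_i\big)\big((-\Delta)^{-1/2}X_j\big)(u_iT_\Omega(u_j))$ up to sign and ordering conventions, which the non-commutativity of $X_j$ and $\Delta$ makes delicate. By the Riesz transform bound (\ref{RieszTransforms}), $\pi\in L^2$ with $\|\pi\|_{L^2}\leq C\|\vu\|_{L^4}\|T_\Omega\vu\|_{L^4}$. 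Then the pressure term is bounded by $\|\pi\|_{L^2(\text{annulus})}\|\vu\|_{L^4}\|\nabla\varphi_R\|_{L^4}\to0$.

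The main obstacle is this pressure representation. On $\mathbb{H}$ the vector fields do not commute with $\Delta$, so writing $\pi$ as a composition of Riesz transforms of the tensor requires care. This is precisely why hypothesis (\ref{DivNS2}) is imposed: it ensures the divergence of (\ref{Eq_NS}) gives a clean elliptic equation for $\pi$. Letting $R\to+\infty$ then yields $\|\nabla\vu\|_{L^2}^2=0$, which concludes the proof.
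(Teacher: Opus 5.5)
Your argument follows the paper's proof. You test (\ref{Eq_NS}) against $\vu$, kill the transport term with (\ref{DivNS}) and the pressure term with (\ref{DivNS1}), and bound $T_\Omega(\vu)$ in $L^4$ via (\ref{FuncIneqLebesgueNoweight1}) with $q=2$. You then get $\pi\in L^2$ from the divergence of the equation together with (\ref{DivNS2}), and conclude from $\int|\nabla\otimes\vu|^2=0$ plus the decay. Your cut-off $\varphi_R$ is a genuine improvement in rigor. The paper integrates by parts on all of $\mathbb{H}$ after only checking that the integrals are finite. Your bounds show the boundary contributions actually vanish, using $\|\nabla\varphi_R\|_{L^4}\simeq R^{N/4-1}=1$ for $N=4$ and the $L^4$ (resp.\ $L^2$) tails on the annulus.

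The step that fails as written is the one you flag yourself: the pressure bound. The displayed formula $\pi=-\sum_{i,j}\big((-\Delta)^{-1/2}X_i\big)\big((-\Delta)^{-1/2}X_j\big)(u_iT_\Omega(u_j))$ is not an identity on $\mathbb{H}$. For instance $[X_1,\Delta]=2\,\partial_{x_3}X_2\neq 0$, so $X_i$ does not commute with $(-\Delta)^{-1/2}$, and $(-\Delta)^{-1/2}X_i(-\Delta)^{-1/2}X_j\neq(-\Delta)^{-1}X_iX_j$. Hence (\ref{RieszTransforms}) alone does not give $\|\pi\|_{L^2}\le C\|\vu\|_{L^4}\|T_\Omega(\vu)\|_{L^4}$. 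You would need $L^2$-boundedness of the second-order operator $(-\Delta)^{-1}X_iX_j$. That is true (Folland's estimates for homogeneous kernels of degree $-N$), but it is an extra input you leave unresolved.

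The paper avoids second-order Riesz transforms entirely. Do not expand into a double divergence; keep $F=(T_\Omega(\vu)\cdot\nabla)\vu$. By H\"older ($\frac34=\frac14+\frac12$), $F$ lies in $L^{4/3}$ with $\|F\|_{L^{4/3}}\le\|T_\Omega(\vu)\|_{L^4}\|\nabla\vu\|_{L^2}$. Then write $\pi=(-\Delta)^{-1/2}\big[(-\Delta)^{-1/2}div(F)\big]$. The inner operator is $\sum_j(-\Delta)^{-1/2}X_j$ acting on $F_j$, which is bounded on $L^{4/3}$ by (\ref{RieszTransforms}) exactly as stated. The outer operator maps $L^{4/3}$ into $L^2$ by the Hardy--Littlewood--Sobolev inequality (\ref{HLSobolevIneq}) with $N=4$. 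This gives $\|\pi\|_{L^2}\le C\|\vu\|_{\dot{H}^1}^2$ with no commutation issue. With that replacement, the rest of your cut-off argument goes through unchanged.
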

%%%%%%%%%%%%%%%%%%%%%%%%%%%%%%%%%%%%%%%%%%%%%%%%%%%
\noindent Remark in particular that, in sharp contrast with the three-dimensional Euclidean case, no additional hypothesis on $\vu$ is required to deduce the uniqueness of the trivial solution to the equations (\ref{Eq_NS}). Note next that several divergence-free hypotheses (namely, (\ref{DivNS}), (\ref{DivNS1}), and (\ref{DivNS2})) are needed since the operators considered here do not commute.  We believe that, in future research, it would be interesting to study this type of result by relaxing the hypotheses (\ref{DivNS}) and (\ref{DivNS2}), which, to the best of our knowledge, is not straightforward.\\ 

Now, let us stress that, in the presence of a rough drift $T_\Omega(\vu)$, the results developed earlier (see \emph{e.g.}, the estimates of the form (\ref{FuncIneqLebesgueNoweight1})) will be crucial for our computations. Finally, we remark that the previous result also holds if one considers the normal drift $\vu$ instead of the rough drift $T_\Omega(\vu)$. Let us mention that, to the best of our knowledge, this result seems to be new in the setting of the Heisenberg group, even when no rough drift is considered.\\

\noindent {\bf Proof.}  We need first to deduce some information over the pressure $\pi$. Indeed, since we are working with a vector field $\vu\in \dot{H^1}(\mathbb{H})$, we claim that $\pi\in L^2(\mathbb{H})$. To prove this fact we apply the divergence operator to the equation (\ref{Eq_NS}), to obtain
$$div(\Delta \vu) - div((T_{\Omega}(\vu)\cdot \nabla)\vu) - div(\nabla \pi)=0.$$
By the divergence free condition (\ref{DivNS2}), we have $div(\Delta \vu)=0$ and we can write 
$$- div((T_{\Omega}(\vu)\cdot \nabla)\vu) - \Delta\pi=0,$$
from which we deduce the equation
$$\pi=(-\Delta)^{-1}div((T_{\Omega}(\vu)\cdot \nabla)\vu).$$
Now taking the $L^2$ norm in the expression above, we have 
$$\|\pi\|_{L^2}=\|(-\Delta)^{-\frac{1}{2}}(-\Delta)^{-\frac{1}{2}}div((T_{\Omega}(\vu)\cdot \nabla)\vu)\|_{L^2}\leq C\|(-\Delta)^{-\frac{1}{2}}div((T_{\Omega}(\vu)\cdot \nabla)\vu)\|_{L^{\frac{4}{3}}},$$
where in the last estimate we used the Hardy-Littlewood-Sobolev inequality given in (\ref{HLSobolevIneq}) above.\\

At this point we recall that the Riesz transforms $(-\Delta)^{-\frac{1}{2}}X_j$ are bounded in Lebesgue spaces $L^p(\mathbb{H})$ for $1<p<+\infty$ (see the estimate (\ref{RieszTransforms}) above), and we can write
$$\|\pi\|_{L^2}\leq C\|(T_{\Omega}(\vu)\cdot \nabla)\vu\|_{L^{\frac{4}{3}}}.$$
Now, by the H\"older inequality $\frac{3}{4}=\frac{1}{4}+\frac{1}{2}$, we obtain
$$\|\pi\|_{L^2}\leq C\|T_{\Omega}(\vu)\|_{L^4}\|\nabla\vu\|_{L^{2}}=C\|T_{\Omega}(\vu)\|_{L^4}\|\vu\|_{\dot{H}^1}.$$
Since we have, by (\ref{FuncIneqLebesgueNoweight1}), the control $\|T_{\Omega}(\vu)\|_{L^4}\leq C\|\vu\|_{\dot{H}^1}$, we finally obtain
$$\|\pi\|_{L^2}\leq C\|T_{\Omega}(\vu)\|_{L^4}\|\vu\|_{\dot{H}^1}\leq C\|\vu\|_{\dot{H}^1}\|\vu\|_{\dot{H}^1}<+\infty,$$
from which we deduce as claimed that $\pi\in L^2(\mathbb{H})$.\\

To continue, we remark now that since $\pi\in  L^2(\mathbb{H})$, then we have $\nabla \pi\in \dot{H}^{-1}(\mathbb{H})$ and since $\vu\in \dot{H}^1(\mathbb{H})$ the quantity $\displaystyle{\int_{\mathbb{H}}\nabla \pi \cdot \vu \,dx}$ is meaningful since we can write, by the $\dot{H}^{-1}-\dot{H}^{1}$ duality,
$$\left|\int_{\mathbb{H}}\nabla \pi \cdot \vu\, dx\right|\leq \|\nabla \pi\|_{\dot{H}^{-1}}\|\vu\|_{\dot{H}^1}<+\infty.$$
Note also that, since $\vu\in \dot{H}^{1}(\mathbb{H})$, we have $\Delta \vu \in \dot{H}^{-1}(\mathbb{H})$ and thus the quantity $\displaystyle{\int_{\mathbb{H}}\Delta \vu\cdot \vu \,dx}$ is also meaningful since 
$$\left|\int_{\mathbb{H}}\Delta \vu\cdot \vu \, dx \right|\leq \|\vu\|_{\dot{H}^1}\|\vu\|_{\dot{H}^1}<+\infty.$$
Remark finally that since $\vu\in \dot{H}^{1}(\mathbb{H})$, due to the Sobolev inequalities (\ref{HLSobolevIneq}) we have $\vu \in L^4(\mathbb{H})$ and by the estimate (\ref{FuncIneqLebesgueNoweight1}) we have also the control $\|T_\Omega(\vu)\|_{L^4}\leq C\|\vu\|_{\dot{H}^1}$. With all this information at hand we notice that the quantity $\displaystyle{\int_{\mathbb{H}} [(T_{\Omega}(\vu)\cdot \nabla)\vu]\cdot \vu\, dx}$ is also meaningful as we have by the H\"older inequality with $1=\frac{1}{4}+\frac{1}{2}+\frac{1}{4}$, 
$$\left|\int_{\mathbb{H}} [(T_{\Omega}(\vu)\cdot \nabla)\vu]\cdot \vu\, dx\right|\leq \|T_\Omega(\vu)\|_{L^4}\|\nabla \otimes \vu\|_{L^2}\|\vu\|_{L^4}\leq C\|\vu\|_{\dot{H}^1}\|\vu\|_{\dot{H}^1}\|\vu\|_{\dot{H}^1}<+\infty.$$
With all these observations, we can multiply the equation (\ref{Eq_NS}) by $\vu$ and we can integrate to obtain
\begin{equation}\label{EquationNSmulti}
\int_{\mathbb{H}}\big[\Delta \vu - (T_{\Omega}(\vu)\cdot \nabla)\vu - \nabla \pi\big]\cdot \vu\, dx=0.
\end{equation}
Since we have the divergence free property (\ref{DivNS1}), we obtain
$$\int_{\mathbb{H}} \nabla \pi \cdot \vu\,dx =-  \int_{\mathbb{H}} \pi \,div(\vu)\,dx=0.$$
Now, since by the property (\ref{DivNS}) we have the identity
$$\int_{\mathbb{H}} [(T_{\Omega}(\vu)\cdot \nabla)\vu]\cdot \vu\, dx=-\int_{\mathbb{H}} [(T_{\Omega}(\vu)\cdot \nabla)\vu]\cdot \vu\, dx,$$
and we can thus obtain that $\displaystyle{\int_{\mathbb{H}} [(T_{\Omega}(\vu)\cdot \nabla)\vu]\cdot \vu\, dx=0}$.\\

With these two facts at hand, from (\ref{EquationNSmulti}) we have
$$\int_{\mathbb{H}}\Delta \vu \cdot \vu \, dx=0,$$
from which we deduce, by an integration by parts, that
$$\int_{\mathbb{H}}|\nabla \otimes\vu|^2\, dx=0.$$
Recalling that we have the condition $|\vu(x)|\underset{|x|\to +\infty}{\longrightarrow 0}$, we finally obtain that $\vu\equiv 0$.\\

We have proven that, in the previous setting, all the solutions $\vu$ of the stationary rough Navier-Stokes equations (\ref{Eq_NS}) over the Heisenberg group that belong to the space $\dot{H}^1(\mathbb{H})$ are null, from which we deduce the uniqueness of the trivial solution in this framework. The proof of the Theorem \ref{TheoUniqueness} is now finished. \hfill $\blacksquare$\\

%%%%%%%%%%%%%%%%%%%%%%%%%%%%%%%%%%%%%%%%%%%%%%%%%%%
%%%%%%%%%%%%%%%%%%%%%%%%%%%%%%%%%%%%%%%%%%%%%%%%%%%
%%%%%%%%%%%%%%%%%%%%%%%%%%%%%%%%%%%%%%%%%%%%%%%%%%%
%%%%%%%%%%%%%%%%%%%%%%%%%%%%%%%%%%%%%%%%%%%%%%%%%%%
%\noindent {\bf Acknowledgment.} 
%The authors thanks to the referee for careful reading of the paper and for useful suggestions. 

\noindent {\bf Conflict of interest.} We declare that we do not have any commercial or associative interest that represents a conflict of interest in connection with the work submitted.\\

%%%%%%%%%%%%%%%%%%%%%%%%%%%%%%%%%%%%%%%%%%%%%%%%%%%

\end{document}